\documentclass[11pt]{amsart}

\usepackage[margin=1.20in]{geometry}

\usepackage[T1]{fontenc}
\usepackage{mathtools}
\usepackage{amssymb}
\usepackage{newtxtext,newtxmath}
\usepackage{enumitem}
\usepackage{xcolor}

\setlist[itemize]{
  topsep=4pt,
  itemsep=2pt,
  parsep=0pt,
  partopsep=0pt
}

\setlist[enumerate]{
  topsep=4pt,
  itemsep=2pt,
  parsep=0pt,
  partopsep=0pt
}

\allowdisplaybreaks[2]

\definecolor{linkblue}{HTML}{1F4E79}

\usepackage[
  colorlinks=true,
  linkcolor=linkblue,
  citecolor=linkblue,
  urlcolor=linkblue,
  pagebackref=true,
  pdfdisplaydoctitle=true
]{hyperref}

\renewcommand*{\backref}[1]{}

\renewcommand*{\backrefalt}[4]{%
  \ifcase #1
    Not cited.%
  \or
    Cited on page~#2.%
  \else
    Cited on pages~#2.%
  \fi
}

\usepackage{bookmark}

\usepackage[
  nameinlink,
  capitalise,
  noabbrev
]{cleveref}

\hypersetup{
  pdftitle={
    Polynomial-Time Lattice-Point Counting without Barvinok Decomposition
  },
  pdfauthor={
    Guoce Xin and Zihao Zhang
  },
  pdfsubject={
    Polynomial-time lattice-point counting in fixed dimension
    without Barvinok decomposition
  },
  pdfkeywords={
    vector partition, Sylvester denumerant, constant term,
    partial fraction, Minkowski theorem, Smith normal form,
    fixed dimension
  }
}

\numberwithin{equation}{section}

\theoremstyle{plain}

\newtheorem{theorem}{Theorem}[section]
\newtheorem{lemma}[theorem]{Lemma}
\newtheorem{proposition}[theorem]{Proposition}

\theoremstyle{definition}

\newtheorem{definition}[theorem]{Definition}
\newtheorem{algorithm}[theorem]{Algorithm}

\theoremstyle{remark}

\newtheorem{remark}[theorem]{Remark}

\crefname{theorem}{theorem}{theorems}
\crefname{lemma}{lemma}{lemmas}
\crefname{proposition}{proposition}{propositions}
\crefname{corollary}{corollary}{corollaries}
\crefname{definition}{definition}{definitions}
\crefname{algorithm}{algorithm}{algorithms}
\crefname{example}{example}{examples}
\crefname{remark}{remark}{remarks}

\DeclareMathOperator{\diag}{diag}

\newcommand{\ZZ}{\mathbb Z}
\newcommand{\QQ}{\mathbb Q}

\newcommand{\y}{\mathbf y}

\newcommand{\nv}{\nu}

\newcommand{\cL}{\mathcal L}

\newcommand{\K}{\mathbb K}
\newcommand{\ind}{\operatorname{ind}}
\newcommand{\abs}[1]{\lvert#1\rvert}
\newcommand{\norm}[1]{\lVert#1\rVert}
\newcommand{\angles}[1]{\langle#1\rangle}
\newcommand{\V}{\mathsf V}

\title[Lattice-Point Counting without Barvinok]
{Polynomial-Time Lattice-Point Counting without Barvinok Decomposition}

\author{Guoce Xin\,\textsuperscript{1}}
\author{Zihao Zhang\,\textsuperscript{2}}

\subjclass[2020]{
  Primary 05A15, 11P21;
  Secondary 68W40, 11Y16
}

\keywords{
  vector partition, Sylvester denumerant, constant term,
  partial fraction, Minkowski theorem, Smith normal form,
  fixed dimension
}

\makeatletter

\newcommand{\firstpageauthorinfo}{%
  \par\vspace{0.65em}
  \begingroup
  \centering
  \begin{minipage}{0.94\textwidth}
    \centering\small

    \textsuperscript{1}%
    School of Mathematical Sciences,
    Capital Normal University, 
     Beijing 100048, P.R.~China\\[-1pt]
    \href{mailto:guoce_xin@163.com}
      {\nolinkurl{guoce_xin@163.com}}

    \vspace{0.45em}

    \textsuperscript{2}%
    School of Mathematics and Statistics,
    Beijing Institute of Technology,
    Beijing 102400, P.R.~China\\[-1pt]
    \href{mailto:zihao-zhang@foxmail.com}
      {\nolinkurl{zihao-zhang@foxmail.com}}

  \end{minipage}\par
  \endgroup
  \vspace{0.35em}
}

\let\original@setauthors\@setauthors

\renewcommand{\@setauthors}{%
  \original@setauthors
  \firstpageauthorinfo
}

\makeatother

\begin{document}

\begin{abstract}
By using constant term manipulations, we present the first polynomial-time algorithm for lattice-point counting in fixed dimension that does not rely on Barvinok's unimodular decomposition. The algorithm instead operates directly on a rational generating function in the form of a nested root average, as produced by the \texttt{SimpCone[S]} framework. By means of a residue-lattice argument based on Minkowski's theorem, we construct a short multiplier that induces an exact non-coprime split of the outermost average. The resulting child terms are encoded as joint root averages, and Smith normal form is used to restore the recursive structure. Two structural invariants---the generation condition and full-column independence---ensure that the recursion is well defined and that all required pole exchanges are valid. 
For a fixed-dimensional simplicial cone, the algorithm achieves recursion depth \(O_d(1+\log\log(2+\ind(\mathcal K^*)))\) and produces a signed sum of at most \((1+\log \ind(\mathcal K^*))^{O_d(1)}\) unimodular cone generating functions. The framework uniformly handles numerators that are Laurent polynomials, not merely monomials, thereby giving a polynomial-time algorithm for MacMahon's partition analysis when the dimension is fixed.
\end{abstract}

\maketitle
\section{Introduction}
  Let
\(A\in\ZZ^{m\times n}\) have full row rank and let
\(\mathbf b\in\ZZ^m\).  Define the rational polyhedron
\[
 \mathcal P(A,\mathbf b)
 :=\{\mathbf x\in\mathbb R_{\geq0}^n:A\mathbf x=\mathbf b\}.
\]
For \(\y=(y_1,\ldots,y_n)\) and
\(\boldsymbol\alpha=(\alpha_1,\ldots,\alpha_n)\in\ZZ^n\), write
\(
 \y^{\boldsymbol\alpha}:=\prod_{i=1}^n y_i^{\alpha_i},
\)
and define the integer-point transform for any set \(S\subseteq\mathbb R^n\)
\begin{equation}\label{eq:intro-vector-partition}
 \sigma_{S}(\y):
 =\sum_{\mathbf x\in S\cap\ZZ^n}
 \y^{\mathbf x}.
\end{equation}
When \( \mathcal P:=\mathcal P(A,\mathbf b)\) is bounded, this transform is a polynomial
and, with \(\mathbf1:=(1,\ldots,1)\),
\[
 \#\bigl(\mathcal P \cap\ZZ^n\bigr)
 =\sigma_{\mathcal P }(\mathbf1).
\]
We call \(\sigma_{\mathcal P(A,\mathbf b)}(\y)\)
the generating function of \(\mathcal P(A,\mathbf b)\).

A basic geometric building block for these transforms is a full-dimensional rational simplicial cone \(\mathcal K\subseteq\mathbb R^d\).
For
\(W=(\mathbf r_1,\ldots,\mathbf r_p)\in\mathbb Q^{n\times p}\), write
\[
 \mathcal K(W)
 :=\operatorname{cone}(\mathbf r_1,\ldots,\mathbf r_p)
 =\{W\boldsymbol k:
   \boldsymbol k \in\mathbb R_{\geq0}^p\}.
\]
When \(W\) is understood, we abbreviate \(\mathcal K(W)\) to
\(\mathcal K\); every cone whose integer-point transform is used is
assumed to be pointed.  For \(\mathbf v\in\mathbb Q^n\), set
\[
 \mathcal K^{\mathbf v}:=\mathbf v+\mathcal K,
 \qquad
 \sigma_{\mathcal K^{\mathbf v}}(\y)
 :=\sum_{\mathbf x\in\mathcal K^{\mathbf v}\cap\mathbb Z^n}
 \y^{\mathbf x}.
\]
If \(\mathbf r_1,\ldots,\mathbf r_p\) are linearly independent, the cone is called simplicial.
When \(p=n\), it is said to be full-dimensional, and we define
\[
\ind(\mathcal K) = \bigl|\det(\mathbf r_1,\ldots,\mathbf r_p)\bigr|.
\]

Barvinok's algorithm
\cite{Barvinok1994,BarvinokPommersheim1999}
represents a landmark advance in lattice point enumeration.
Geometrically,
the algorithm first uses Brion's theorem to reduce the computation to
the tangent cones at the vertices. It then triangulates these cones
\cite{Lasserre1983,Lawrence1991}. That is,
\begin{equation} \label{eq: simplicalcone}
  \sigma_{\mathcal P}(\y) = \sum_{i}  \pm \sigma_{\mathcal K_i^{\mathbf v_i}}(\y),
\end{equation}
where \(\mathcal K_i\) is a simplicial cone.

For fixed dimension \(d\), Barvinok shows that there exists a polynomial-time algorithm decomposing the generating function of \(\mathcal K_i\) into a signed sum of generating functions of unimodular cones \(\mathcal U_\ell\) (\(\ind(\mathcal U_\ell)=1\)), which can be written directly as
\begin{equation}\label{eq:barvinok-output}
\begin{aligned}
 \sigma_{\mathcal K^{\mathbf v}}(\y)
 &=\sum_{\ell=1}^{N}\varepsilon_\ell
   \sigma_{\mathcal U_\ell}(\y)
 =\sum_{\ell=1}^{N}\varepsilon_\ell
   \frac{\y^{\mathbf a_\ell}}
   {\displaystyle\prod_{j=1}^d
    \left(1-\y^{\mathbf b_{\ell j}}\right)},\\[-2pt]
 &\hspace{8mm}\varepsilon_\ell\in\{-1,1\},
 \qquad \mathbf a_\ell,\mathbf b_{\ell j}\in\mathbb Z^d.
\end{aligned}
\end{equation}
Barvinok showed that \(N\) is bounded by
\[
\left(1+\log(\ind(\mathcal K)) \right)^{O_d(1)}.
\]

Lattice-point counting for polytopes has been studied concurrently in combinatorics, with relevant algorithms such as \texttt{Omega} \cite{andrews2001macmahonOmegaalgorithm2}, \texttt{GenOmega} \cite{Han2003},
\texttt{Ell} \cite{Xin2004},  \texttt{CTEuclid} \cite{Xin2015}, \texttt{Polyhedral Omega} \cite{2017Polyhedral}, and so on. To our knowledge, all implementable polynomial-time algorithms for such problems under fixed dimension are centered on Barvinok's unimodular decomposition algorithm \cite{2017Polyhedral}.

Barvinok's method has also led to mature software implementations.
\texttt{LattE} provided the
first implementation of Barvinok's unimodular-cone decomposition and
supports exact lattice-point counting, Ehrhart computations, and
integration over rational polytopes
\cite{DeLoeraHemmeckeTauzerYoshida2004}.  The \texttt{barvinok} library extends this framework to parametric and nonparametric polytopes and implements the projection algorithm due to Barvinok and Woods \cite{BarvinokWoods2003,koppe2008implementation}.

Note that when \(\ind(\mathcal K)\) is treated as a fixed parameter, Barvinok's fixed-dimensional decomposition is no longer the most efficient option. For certain structured families of \(\Delta\)-modular polyhedra, tailored algorithms admit polynomial-time complexity even with varying dimensions, provided that the underlying structural parameters remain fixed; see the related work in \cite{GribanovZolotykh2022,XinZhangZhang2023} and the references therein. These approaches, however, rely on special modular constraints and do not apply to general rational simplicial cones.

A separate limitation of Barvinok's framework, from the perspective of MacMahon's partition analysis, is its inability to handle numerators that are Laurent polynomials in a unified manner. Such numerators arise naturally in many applications and are routinely accommodated by constant-term or root-averaging methods. This observation motivates the present work: we develop a polynomial-time fixed-dimensional algorithm based on the normalized root averaging operator that applies to arbitrary rational inputs, handles Laurent polynomial numerators without additional overhead, and entirely avoids Barvinok's unimodular decomposition.

For a bounded polytope \(\mathcal P(A,\mathbf b)\), the polynomial \(\sigma_{\mathcal P}(\y)\) is well-defined at the specialization \(\y=\mathbf{1}\), while the individual rational functions appearing in short representations such as \eqref{eq:barvinok-output} typically exhibit singularities at this point. The desired specialization is therefore rigorously implemented via the well-defined substitution \(y_i = \exp(\beta_i s)\): one extracts the constant term in \(s\) from each summand and accumulates the resulting contributions. In practice, the parameters \(\beta_i\) are usually chosen randomly, though deterministic polynomial-time selections are also available \cite{DakhnoGribanovKasianov2024}. Once a short representation consisting of \(N\) summands is constructed, the approach proposed in \cite{XinZhangZhang2024} realizes this specialization with only
\[
O\!\left(Nd\log^2 d\right)
\]
arithmetic operations over a finite field \(\mathbb F_p\). A sufficiently large set of modular evaluations then recovers the exact lattice-point count via the Chinese remainder theorem.

In combinatorics,
unlike triangulation and Brion's theorem, an alternative algebraic decomposition algorithm to obtain the simplicial cone decomposition \eqref{eq: simplicalcone} is provided by the \texttt{SimpCone[S]} \cite{XinXuZhang2025}.   Related applications of this framework include
polytope-volume computation \cite{XinXuZhangZhang2024} and the Ehrhart
series of magic squares of order seven and eight \cite{QiuXinZhang2026}.

We follow the definitions in \cite{XinXuZhang2025}; the generating function of a simplicial cone can be expressed via the normalized root averaging operator, defined as follows. Let \(\mathbb L\supseteq\mathbb Q(\y)\) be a field of characteristic zero, and let \(\overline{\mathbb L}\) denote its fixed algebraic closure. For any \(u\in\mathbb L^*\), integer \(s\geq 1\), and rational function \(H\in\mathbb L(\lambda)\) that is regular at every root of \(u\lambda^s=1\), define
\[
 \Phi_{u,s}^{\lambda}H(\lambda)
 :=\frac1s
   \sum_{\substack{\omega\in\overline{\mathbb L}^{\,*}\\
                    u\omega^s=1}}
   H(\omega).
\]
For a given rational simplicial cone \(\mathcal K\),
\(\sigma_\mathcal K\)
has
the form
\begin{equation}\label{eq:intro-successive-ct}
\begin{aligned}
 &\Phi_{u_1,s_1}^{\lambda_1}\cdots
  \Phi_{u_h,s_h}^{\lambda_h}F(\Lambda;\y)\\
\end{aligned},
\end{equation}
where
$$F(\Lambda;\y)
  :=\frac{c\,u_0\Lambda^{\mathbf c_0}}
  {\displaystyle\prod_{\ell=1}^{r}
   \left(1-u_{h+\ell}\Lambda^{\mathbf c_\ell}\right)}, \qquad \Lambda=(\lambda_1,\ldots,\lambda_h).$$
 More generally, for a rational polyhedron \(\mathcal P(A,\mathbf b)\), \texttt{SimpCone[S]} efficiently decomposes its generating function into finitely many summands of the form \eqref{eq:intro-successive-ct}.
For detailed formulas see Section~\ref{sec:general-reduction}.

In \cite{XinZhangZhang2023}, the \texttt{DecDenu} method evaluates \eqref{eq:intro-successive-ct} one variable at a time. It relies on the core identity
\[
\Phi_{u,s}^{\lambda}H(\lambda)=\Phi_{u^{1/k},s}^{\lambda}H(\lambda^k),
\]
which holds under the condition \(\gcd(s,k)=1\). Its Euclidean pole exchange replaces a selected binomial pole with poles of smaller order. Nevertheless, existing recursion-based analysis fails to deliver a fixed-dimensional polynomial bound, even though its practical performance is competitive with \texttt{LattE} on certain families of problems.

The present paper resolves this obstruction and computes \eqref{eq:intro-successive-ct} globally rather than one variable at a time.  Its main ingredients are as follows.
\begin{enumerate}[label=\textup{(\roman*)},leftmargin=*]
\item A residue-lattice argument based on Minkowski's theorem
\cite{Cassels1997} produces a short multiplier; in fixed dimension, such a
multiplier can be found deterministically by Lenstra's algorithm
\cite{Lenstra1983}.  Using only gcd computations and the Chinese remainder
theorem, we then construct a unit lift of the reduced multiplier.  This
gives an exact split of the selected root average into two nested averages,
uses only integral exponents, and requires no integer factorization.

\item The split couples the new root equations, so the resulting operators
are not independent scalar averages.  We encode each pole-exchange child
as a joint root average, put its exponent matrix into Smith normal form,
and return the diagonalized term to the recursion.

\item Two structural invariants make the recursion well defined.  The
generation condition supplies focused primitivity and the bound
\(h\leq r\leq d\), where \(h\) is the number of carried equations and
\(r\) is the number of remaining denominator factors.  Full-column
independence of the exponent matrix and saturation of its column lattice
give the regularity and pairwise coprimality required by pole exchange.
\end{enumerate}

For a directly supplied fixed-dimensional rational simplicial cone \(\mathcal K\), denote by \(\mathcal K^*\) the dual cone of \(\mathcal K\).
Theorem~\ref{thm:complexity} shows that Algorithm \ref{alg:jointct-kernel} has recursion depth
\[
 O_d\bigl(1+\log\log(2+\ind(\mathcal K^*))\bigr)
\]
and expresses the cone transform as a signed sum of at most
\[
 (1+\log \ind(\mathcal K^*))^{O_d(1)}
\]
generating functions of possibly shifted unimodular simplicial cones.  If
a nonterminal state has determinant \(D\), then every child satisfies
\[
 D_{\mathrm{child}}
 \leq
 \min\!\left\{D^{1-1/d^2},\left\lfloor\frac D2\right\rfloor\right\}.
\]
Here \(d\) is the dimension of the directly supplied cone, whereas \(n\)
is the ambient dimension of a general-system input.
For a general system, Proposition~\ref{prop:simpcone-smith} produces a sum of the form
\eqref{eq:intro-successive-ct}.  The size and bit complexity of producing
and processing that entire outer list are not included in
Theorem~\ref{thm:complexity}.

The paper is organized as follows.  Section~\ref{sec:general-reduction}
develops the two constant-term input routes and identifies their common
selected-pole form.  Section~\ref{sec:local-calculus} reviews the local
pole calculus and the coprime \texttt{DecDenu} step, and
Section~\ref{sec:noncoprime} proves the exact non-coprime split.
Section~\ref{sec:recursive-terms} introduces joint root averages and Smith
normalization.  Section~\ref{sec:jointct-algorithm} assembles the queue
algorithm, while Section~\ref{sec:jointct-invariants} establishes its
structural invariants and well-definedness.  The fixed-dimensional depth
and output bounds are proved in Section~\ref{sec:complexity}, followed by
concluding remarks in Section~\ref{sec:conclusion}.

\section{Simplicial-cone constant terms and the common operator problem}
\label{sec:general-reduction}

We now make the introductory formula precise.  After fixing the minimal
notation for selected-pole averages, we begin with the direct
simplicial-cone construction.  We then recall the
\texttt{SimpCone[S]}--Smith reduction for a general system and show that
the two routes produce the same class of diagonal operator inputs.  All
cone identities in this section are identities of rational generating
functions; when series expansions are required, they are taken in a
common field of iterated Laurent series.

\subsection{Basic notation and selected-pole averages}

For \(A\in\mathbb Z^{m\times n}\), write \(A_j:=A\mathbf e_j\) for its
\(j\)th column and set \([n]:=\{1,\ldots,n\}\).  If
\(J=\{j_1<\cdots<j_p\}\subseteq[n]\), then
\[
 A_J:=\bigl(A\mathbf e_{j_1},\ldots,A\mathbf e_{j_p}\bigr).
\]
For \(\boldsymbol\alpha=(\alpha_1,\ldots,\alpha_n)\in\mathbb Z^n\), we
use the monomial notation
\[
 \y^{\boldsymbol\alpha}:=\prod_{i=1}^n y_i^{\alpha_i}.
\]
The variables \(\lambda_i\) and \(\nv_i\) are formal binding variables;
actual roots substituted for them are denoted by \(\omega_i\).  The marker
field is \(\K:=\QQ(\y)\).  The symbol \(\mathbb L\) denotes a
characteristic-zero coefficient field containing \(\K\); in a local
calculation it may be enlarged by adjoining the binding variables other
than the one currently in focus.

We write \(\Phi\) for a one-variable root average and \(\Psi\) for a
joint root average.  In a normalized recursive state, every \(u_\ell\)
is a Laurent monomial in the marker variables alone.  In the local one-variable calculus, a
coefficient may additionally involve the other binding variables, but it
never involves the focused variable.

\begin{definition}[Normalized root average]\label{def:phi}
Let \(u\in\mathbb L^*\) and \(a\in\mathbb Z_{\geq1}\), and write
\begin{equation}\label{eq:root-set}
 \mathcal R_a(u)
 =\{\omega\in\overline{\mathbb L}^{\,*}:u\omega^a=1\}.
\end{equation}
If \(H(\lambda)\) is defined at every point of \(\mathcal R_a(u)\), set
\begin{equation}\label{eq:phi}
 \Phi_{u,a}^{\lambda}H(\lambda)
 =\frac1a\sum_{\omega\in\mathcal R_a(u)}H(\omega).
\end{equation}
Thus, if \(\omega_0\in\mathcal R_a(u)\), then
\[
 \mathcal R_a(u)
 =\omega_0\{\zeta\in\overline{\mathbb L}^{\,*}:\zeta^a=1\}.
\]
The value is independent of the chosen splitting field and belongs to
\(\mathbb L\) whenever
\(H\in\mathbb L(\lambda)\).
\end{definition}

We record this root average by the following underlined selected-pole
notation:
\begin{equation}\label{eq:underlined-pole-average}
 \left(
  \underset{\lambda}{\operatorname{CT}}\,
  \frac{1}{\underline{1-u\lambda^a}}
 \right)H(\lambda)
 :=
 \Phi_{u,a}^{\lambda}H(\lambda).
\end{equation}
Such notations are also natural arise form partial fraction decompostion,
we will describe in Section \ref{sec:local-calculus}.
  It does not
denote the ordinary constant term of the complete quotient, which may
receive contributions from several poles and depends on the chosen
iterated Laurent-series expansion \cite{Xin2004}.  Therefore, we may successively define
\begin{equation}\label{eq:successive-root-average}
\begin{aligned}
&\left(
 \underset{\lambda_h}{\operatorname{CT}}\,
 \frac{1}{\underline{1-\lambda_h^{s_h}u_h}}
\right)
\circ\cdots\circ
\left(
 \underset{\lambda_1}{\operatorname{CT}}\,
 \frac{1}{\underline{1-\lambda_1^{s_1}u_1}}
\right)
H(\Lambda).
\end{aligned}
\end{equation}
Note that these operators commute with one another, so the result is independent of the order of composition.

\subsection{The constant-term model of a simplicial cone}

We begin with a rational simplicial cone supplied directly.  This is the
basic constant-term input from which the joint recursion is developed.

\begin{proposition}[Constant-term model of a simplicial cone]
\label{prop:simplicial-cone-ct}
Let \(\mathcal K\subset\mathbb R^d\) be a full-dimensional rational
simplicial cone, and let
\(B\in\mathbb Z^{d\times d}\) be nonsingular with columns generating
the dual cone \(\mathcal K^*\).  Choose
\(P,Q\in\operatorname{GL}_d(\mathbb Z)\) such that
\[
 PB^{\mathsf T}Q
 =S:=\diag(s_1,\ldots,s_d),
 \qquad
 1\leq s_1\mid\cdots\mid s_d.
\]
Put
\[
 \y=(y_1,\ldots,y_d),
 \qquad
 \Lambda=(\lambda_1,\ldots,\lambda_d).
\]
Then
\begin{equation}\label{eq:simplicial-cone-ct}
\begin{aligned}
\sigma_{\mathcal K}(\y)
={}&
\left(
 \underset{\lambda_d}{\operatorname{CT}}\,
 \frac{1}
 {\underline{1-\lambda_d^{s_d}
                 \y^{Q\mathbf e_d}}}
\right)
\circ\cdots\\[-2pt]
&\qquad{}\circ
\left(
 \underset{\lambda_1}{\operatorname{CT}}\,
 \frac{1}
 {\underline{1-\lambda_1^{s_1}
                 \y^{Q\mathbf e_1}}}
\right)
\frac{1}
{\displaystyle\prod_{j=1}^d
 \left(1-\Lambda^{-P\mathbf e_j}\right)}.
\end{aligned}
\end{equation}
The displayed cofactor is regular on the common root set; hence every
selected-pole operator in \eqref{eq:simplicial-cone-ct} is well defined.
\end{proposition}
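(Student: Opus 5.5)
The plan is to read both sides as sums over lattice points and match them term by term. The dictionary comes from the Smith normal form. I then justify exchanging the finite root averages with the infinite geometric expansion by an analytic specialization of \(\y\).

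\emph{Step 1 (lattice-point description).} Since the columns of \(B\) generate \(\mathcal K^*\), we have \(\mathcal K=\{\mathbf x: B^{\mathsf T}\mathbf x\geq 0\}\). Hence \(\sigma_{\mathcal K}(\y)=\sum \y^{\mathbf x}\), summed over \(\mathbf x\in\ZZ^d\) with \(\mathbf k:=B^{\mathsf T}\mathbf x\in\ZZ_{\geq0}^d\). Substitute \(\mathbf x=Q\mathbf z\); this is a bijection of \(\ZZ^d\) because \(Q\in\operatorname{GL}_d(\ZZ)\). Using \(B^{\mathsf T}=P^{-1}SQ^{-1}\) we get \(P\mathbf k=S\mathbf z\). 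Since \(B^{\mathsf T}\) is injective, the map \(\mathbf x\mapsto\mathbf k\) identifies the summation set with
\[
\{\mathbf k\in\ZZ_{\geq0}^d:\ s_i\mid (P\mathbf k)_i\ \text{for all } i\}.
\]
Under this identification \(\mathbf z=S^{-1}P\mathbf k\). Therefore
\[
\sigma_{\mathcal K}(\y)=\sum_{\mathbf k}\prod_{i=1}^d \bigl(\y^{Q\mathbf e_i}\bigr)^{(P\mathbf k)_i/s_i},
\]
summed over the same set of \(\mathbf k\).

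\emph{Step 2 (action of the averages on monomials).} Let \(u_i:=\y^{Q\mathbf e_i}\). For a single variable, \(\Phi^{\lambda}_{u,s}\lambda^{-m}\) equals \(u^{m/s}\) when \(s\mid m\) and \(0\) otherwise. This is the usual character-sum identity over \(\omega_0\{\zeta:\zeta^s=1\}\), together with \(\omega_0^{-s}=u\). Composing the \(d\) commuting averages gives
\[
\Phi\,\Lambda^{-P\mathbf k}=\prod_i u_i^{(P\mathbf k)_i/s_i}
\]
when all \(s_i\mid(P\mathbf k)_i\), and \(0\) otherwise. Formally expanding \(\prod_j(1-\Lambda^{-P\mathbf e_j})^{-1}=\sum_{\mathbf k\geq0}\Lambda^{-P\mathbf k}\) and applying the averages termwise therefore reproduces Step 1 exactly.

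\emph{Step 3 (justifying the interchange and regularity).} I expect this to be the main obstacle, because the averages evaluate at roots rather than acting on formal series. I would specialize \(\y=e^{\mathbf t}\) with \(\mathbf t\in\mathbb R^d\) real, and compute moduli. Every root satisfies \(\log\abs{\omega_i}=-(Q\mathbf e_i)\cdot\mathbf t/s_i\). Consequently
\[
\log\abs{\omega^{-P\mathbf e_j}}=\mathbf e_j^{\mathsf T}P^{\mathsf T}S^{-1}Q^{\mathsf T}\mathbf t=(B^{-1}\mathbf t)_j,
\]
using \(S^{-1}=P^{-\mathsf T}B^{-1}Q^{-\mathsf T}\).

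Choose \(\mathbf t=B\mathbf w\) with \(\mathbf w<0\) componentwise. Then \(\abs{\omega^{-P\mathbf e_j}}<1\) at every root, so each factor \(1-\omega^{-P\mathbf e_j}\) is nonzero. Moreover each geometric series converges absolutely, uniformly over the finitely many roots. The series \(\sum_{\mathbf x\in\mathcal K}e^{\mathbf t\cdot\mathbf x}\) also converges absolutely, because \(\mathbf t\cdot\mathbf x=\mathbf w\cdot B^{\mathsf T}\mathbf x\) is negative and linear in \(\mathbf k\). Hence Steps 1–2 hold as an identity of convergent sums on the open set \(\{B^{-1}\mathbf t<0\}\).

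Both sides are rational functions of \(\y\); the left side is the standard rational cone transform, and the right side is a symmetric function of roots, hence lies in \(\K\) by Definition~\ref{def:phi}. Two rational functions agreeing on a nonempty open set of real specializations agree identically. Regularity as a rational statement follows the same way. If some factor \(1-\Lambda^{-P\mathbf e_j}\) vanished at a root of the generic system, the algebraic relation would persist under the specialization \(\y=e^{\mathbf t}\), contradicting \(\abs{\omega^{-P\mathbf e_j}}<1\). So the cofactor is regular on the common root set, and every selected-pole operator is well defined.
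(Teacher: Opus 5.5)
Your argument is correct. The paper states this proposition without proof and simply uses it as the input of Algorithm~\ref{alg:jointct-kernel}, so there is no authors' argument to compare against. Your Steps~1--2 are right:
\begin{itemize}
\item the dictionary is $\mathbf x=QS^{-1}P\mathbf k=B^{-\mathsf T}\mathbf k$;
\item the integrality condition is $s_i\mid(P\mathbf k)_i$;
\item the filter $\Phi^{\lambda}_{u,s}\lambda^{-m}\in\{u^{m/s},0\}$ is the same root-of-unity computation used in the proof of Lemma~\ref{lem:split}.
\end{itemize}
Your modulus identity $\log\abs{\omega^{-P\mathbf e_j}}=(B^{-1}\mathbf t)_j$ is also correct.

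One step needs more justification than you give it. Knowing that the average lies in $\K$ does not by itself show that this element of $\K$, evaluated at $\y=e^{\mathbf t}$, equals the average computed with the complex roots. The claim is true, but it must be argued. For instance, write the average as a quotient of two $\mu_{s_1}\times\cdots\times\mu_{s_d}$-invariant expressions in the roots, hence of Laurent polynomials in $\y$. Its denominator $\prod_{\omega}\prod_j(1-\omega^{-P\mathbf e_j})$ is nonzero at your specialization.

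You can in fact avoid the analysis altogether. Put $u_i=\y^{Q\mathbf e_i}$, $X_j:=\Lambda^{-P\mathbf e_j}$ and $\mathbf r_j:=B^{-\mathsf T}\mathbf e_j$. Raising to the power $s_d$ and using $\lambda_i^{s_i}=u_i^{-1}$ gives $X_j^{s_d}=\y^{s_d\mathbf r_j}$ on the common root set. Since $\mathbf r_j\neq0$, this already proves regularity; it is the same power-raising device the paper uses in Lemma~\ref{lem:regularity-coprimality}. It also gives $1/(1-X_j)=\sum_{a=0}^{s_d-1}X_j^{a}/(1-\y^{s_d\mathbf r_j})$ there. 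Your filter then yields
\[
 \Phi^{\lambda_d}_{u_d,s_d}\cdots\Phi^{\lambda_1}_{u_1,s_1}
 \prod_{j=1}^d\frac{1}{1-\Lambda^{-P\mathbf e_j}}
 =\prod_{j=1}^d\frac{1}{1-\y^{s_d\mathbf r_j}}
 \sum_{\substack{\mathbf a\in\{0,\ldots,s_d-1\}^d\\ S^{-1}P\mathbf a\in\ZZ^d}}
 \y^{B^{-\mathsf T}\mathbf a}.
\]
This is your Step~1 sum regrouped via $\mathbf k=\mathbf a+s_d\mathbf m$. The regrouping is valid because $s_i\mid s_d$, so the condition $S^{-1}P\mathbf k\in\ZZ^d$ is invariant under $\mathbf k\mapsto\mathbf k+s_d\mathbf e_j$.

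This finite version is an identity in $\K$ from the outset and handles regularity in the same stroke. Your analytic version is equally valid once the specialization point is made explicit, and it shows directly why averaging the infinite geometric series termwise is legitimate.
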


We next return to the notation \(\mathcal P(A,\mathbf b)\) and
\(\sigma_{ \mathcal P( A,\mathbf b)}(\y)\) from \eqref{eq:intro-vector-partition}.
We use the following consequence of the \texttt{SimpCone[S]}
construction and its Smith-normal-form postprocessing; see
Theorem~14, Algorithm~18, and Section~5.2, especially
Equation~(5.7), of \cite{XinXuZhang2025}.

\begin{proposition}[SimpCone--Smith reduction to successive constant terms]
\label{prop:simpcone-smith}
Let \(A\in\mathbb Z^{m\times n}\) have rank \(m\), and let
\(\mathbf b\in\mathbb Z^m\).  The \texttt{SimpCone[S]} algorithm
produces a finite collection
\[
\mathcal O_m
=
\left\{
 (\varepsilon_\iota,J_\iota,\mathcal K_\iota):
 \iota\in\mathcal I
\right\},
\]
where \(\varepsilon_\iota\in \ZZ\),
\(J_\iota\subseteq[n]\) has cardinality \(m\), and
\(\mathcal K_\iota\) is a rational shifted simplicial cone of dimension
\(n-m\).  As an
identity of rational generating functions,
\begin{equation}
\label{eq:simpcone-decomposition}
\sigma_{ \mathcal P( A,\mathbf b)}(\y)
=
\sum_{\iota\in\mathcal I}
\varepsilon_\iota\,
\sigma_{\mathcal K_\iota}(\y).
\end{equation}

Fix \(\iota\in\mathcal I\) and write
\[
 J=J_\iota=\{j_1<\cdots<j_m\},
 \qquad
 J^c=\{k_1<\cdots<k_{n-m}\}.
\]
When \(n=m\), \(J^c\) and every tuple or product indexed by it are
understood to be empty.
The pivot block \(A_J\) is
nonsingular.  Choose \(P,Q\in\operatorname{GL}_m(\mathbb Z)\) such that
\[
 PA_JQ=S:=\diag(s_1,\ldots,s_m),
 \qquad
 1\leq s_1\mid\cdots\mid s_m,
\]
and put
\[
 \Lambda=(\lambda_1,\ldots,\lambda_m),
 \qquad
 \mathbf q_i=\sum_{k=1}^m Q_{ki}\mathbf e_{j_k}
 \quad(1\leq i\leq m).
\]
Then the Smith postprocessing gives
\begin{equation}\label{eq:successive-decd}
\begin{aligned}
\sigma_{\mathcal K_\iota}(\y)
={}&
\left(
 \underset{\lambda_m}{\operatorname{CT}}\,
 \frac{1}{\underline{1-\lambda_m^{s_m}\y^{\mathbf q_m}}}
\right)
\circ\cdots \\[-2pt]
&\qquad{}\circ
\left(
 \underset{\lambda_1}{\operatorname{CT}}\,
 \frac{1}{\underline{1-\lambda_1^{s_1}\y^{\mathbf q_1}}}
\right)
\frac{\Lambda^{-P\mathbf b}}
{\displaystyle\prod_{j\in J^c}
 (1-\Lambda^{PA_j}y_j)}.
\end{aligned}
\end{equation}
The displayed cofactor is regular on the common root set; hence every
selected-pole operator in \eqref{eq:successive-decd} is well defined.
\end{proposition}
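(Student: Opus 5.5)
The existence of the collection \(\mathcal O_m\) and the decomposition \eqref{eq:simpcone-decomposition} I would take directly from \texttt{SimpCone[S]} (Theorem~14 and Algorithm~18 of \cite{XinXuZhang2025}). What remains is formula \eqref{eq:successive-decd} and the regularity claim. The plan is to write \(\sigma_{\mathcal K_\iota}\) explicitly as a lattice sum, apply a root-of-unity filter diagonalized by the Smith form, and then sum the resulting geometric series. The cited construction describes \(\mathcal K_\iota\) as the shifted simplicial cone obtained by keeping the nonnegativity constraints only on the free coordinates \(x_{J^c}\) and solving \(A\mathbf x=\mathbf b\) for the pivot coordinates \(x_J\), which carry no sign condition. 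Granting this description,
\[
\sigma_{\mathcal K_\iota}(\y)=\sum_{\mathbf x_{J^c}\in\ZZ_{\geq0}^{n-m}}
\Bigl[\mathbf b-\textstyle\sum_{j\in J^c}A_jx_j\in A_J\ZZ^m\Bigr]\,
\prod_{j\in J^c}y_j^{x_j}\;\y_J^{\,A_J^{-1}(\mathbf b-\sum_{j\in J^c}A_jx_j)}.
\]
Pinning down this lattice-sum description of \(\mathcal K_\iota\) from \cite{XinXuZhang2025} (Section~5.2, Equation~(5.7)) is the step I expect to need the most care, since everything afterwards is a direct computation.

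Next I diagonalize the membership condition. Put \(\mathbf z:=P(\mathbf b-\sum_{j\in J^c}A_jx_j)\in\ZZ^m\). Since \(A_J^{-1}=QS^{-1}P\), the condition \(\mathbf b-\sum_{j\in J^c}A_jx_j\in A_J\ZZ^m\) is equivalent to \(s_i\mid z_i\) for every \(i\). In that case
\[
\y_J^{\,A_J^{-1}(\mathbf b-\sum A_jx_j)}=\y_J^{\,QS^{-1}\mathbf z}=\prod_{i=1}^m(\y^{\mathbf q_i})^{z_i/s_i},
\]
using the definition of \(\mathbf q_i\). The key one-variable identity is the following: for \(t\in\ZZ\) and \(u\in\mathbb L^*\),
\[
\Phi^\lambda_{u,s}\lambda^{-t}=\begin{cases}u^{t/s}, & s\mid t,\\ 0, & s\nmid t.\end{cases}
\]
This holds because \(\omega^{-t}=\omega_0^{-t}\zeta^{-t}\) with \(\omega_0^{-s}=u\), and \(\frac1s\sum_{\zeta^s=1}\zeta^{-t}\) equals \([s\mid t]\). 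Applying it coordinatewise with \(u_i=\y^{\mathbf q_i}\) shows that each summand equals the composite operator of \eqref{eq:successive-decd} applied to
\[
\Lambda^{-\mathbf z}\prod_{j\in J^c}y_j^{x_j}=\Lambda^{-P\mathbf b}\prod_{j\in J^c}(\Lambda^{PA_j}y_j)^{x_j}.
\]
The composite is a product of independent one-variable averages, so the operators commute, consistent with the remark after \eqref{eq:successive-root-average}.

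Finally I sum over \(\mathbf x_{J^c}\). Each operator is a finite average over the common root set \(\prod_i\mathcal R_{s_i}(\y^{\mathbf q_i})\), so it commutes with the summation. For \(j\in J^c\), the variable \(y_j\) does not occur in any \(\y^{\mathbf q_i}\), because \(\mathbf q_i\) is supported on \(J\). Hence every root \(\omega\) is algebraic over \(\QQ(\y_J)\), while \(y_j\) is transcendental over that field. It follows that \(1-\omega^{PA_j}y_j\neq0\), which gives the asserted regularity of the cofactor on the common root set.

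To justify the geometric series, I would work in the field of iterated Laurent series in which \(y_j\) is small for \(j\in J^c\), with coefficients in \(\overline{\QQ(\y_J)}\). In that field each \(\sum_{x_j\ge0}(\omega^{PA_j}y_j)^{x_j}\) converges to \(1/(1-\omega^{PA_j}y_j)\). This yields \eqref{eq:successive-decd} as an identity of rational functions. When \(n=m\), there is no sum and the formula reduces to the single filtered monomial.
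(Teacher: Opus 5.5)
Your argument is correct, but it follows a different route from the paper. The paper gives no derivation of its own: it imports both \eqref{eq:simpcone-decomposition} and \eqref{eq:successive-decd} from Theorem~14, Algorithm~18 and Equation~(5.7) of \cite{XinXuZhang2025}. In this paper's language, the cited ``Smith postprocessing'' is Theorem~\ref{thm:smith-normalization} applied to the coupled joint average. There \(B=A_J\), \(U=(y_{j_1},\ldots,y_{j_m})\), and the cofactor is \(\V^{-\mathbf b}/\prod_{j\in J^c}(1-\V^{A_j}y_j)\). One gets \(\widetilde u_i=U^{Q\mathbf e_i}=\y^{\mathbf q_i}\) and \(\V^{\mathbf a}\mapsto\Lambda^{P\mathbf a}\), which produces exactly \(\Lambda^{-P\mathbf b}\) and \(\Lambda^{PA_j}\). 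Regularity of the initial cofactor is then a special case of the full-column independence in Lemma~\ref{lem:regularity-coprimality}.

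You instead verify the diagonalized formula from scratch:
\begin{itemize}
\item \(P\) and \(Q\) turn membership in \(A_J\ZZ^m\) into the coordinatewise conditions \(s_i\mid z_i\).
\item The one-variable filter \(\Phi^{\lambda}_{u,s}\lambda^{-t}=u^{t/s}[s\mid t]\) reproduces both that condition and the correct \(\y_J\)-monomial.
\item The geometric series in \(y_{J^c}\) reassembles the cofactor.
\item Transcendence of \(y_j\) (\(j\in J^c\)) over \(\QQ(\y_J)\) gives regularity directly.
\end{itemize}

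Your route buys self-containment. From the citation it needs only the geometric description of \(\mathcal K_\iota\), and that description is actually forced: your computation shows the right-hand side of \eqref{eq:successive-decd} is the transform of \(\{\mathbf x\in\mathbb R^n:A\mathbf x=\mathbf b,\ x_{J^c}\geq0\}\). The paper's route buys uniformity. The same Smith normalization and the same independence invariant are reused for every recursive child. Your transcendence argument, by contrast, relies on the \(y_j\) with \(j\in J^c\) being absent from the carried coefficients, so it does not propagate to descendants.

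When writing it up, add one sentence for the passage from series to rational functions. \(\QQ(\y)\) embeds injectively into your iterated Laurent field. The rational function \(\sigma_{\mathcal K_\iota}\) expands there to the lattice sum, because the generator of \(\mathcal K_\iota\) in each direction \(x_j\) (\(j\in J^c\)) has as monomial a positive power of \(y_j\) times a Laurent monomial in \(\y_J\).
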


It is easy to verify that
\[
 \Phi_{u_i,1}^{\lambda_i}H(\lambda_i)=H(u_i^{-1}).
\]
Since the operators  commute pairwise, we apply the above identity to all \(\Phi\)-operators with \(s_i = 1\). This allows us to assume without loss of generality that
\[
 S=\diag(s_1,\ldots,s_h),
 \qquad
 1<s_1\mid\cdots\mid s_h,
 \qquad
 \Lambda=(\lambda_1,\ldots,\lambda_h).
\]
Accordingly, the representations derived from Propositions~\ref{prop:simplicial-cone-ct} and \ref{prop:simpcone-smith} yield the following formulation, which serves as the starting point of our subsequent analysis:
\begin{equation}\label{eq:successive-ct-problem}
\left(
 \underset{\lambda_h}{\operatorname{CT}}\,
 \frac{1}{\underline{1-\lambda_h^{s_h}u_h}}
\right)
\circ\cdots\circ
\left(
 \underset{\lambda_1}{\operatorname{CT}}\,
 \frac{1}{\underline{1-\lambda_1^{s_1}u_1}}
\right)
\frac{c\,u_0\Lambda^{\mathbf c_0}}
{\displaystyle\prod_{\ell=1}^{r}
 \left(1-u_{h+\ell}\Lambda^{\mathbf c_\ell}\right)}
\end{equation}

\begin{remark}
The output of \texttt{SimpCone[S]} consists of simplicial cones indexed by \(J\), an \(m\)-subset of \([n]\). Thus the number of terms is bounded by
\(\binom{n}{m}=\binom{n}{d}\), where \(d=n-m\) is the dimension. When \(d\) is fixed, this bound is polynomial in \(n\). Consequently, we may focus our attention on the decomposition of individual simplicial cones.
\end{remark}

\section{Local selected-pole calculus and the coprime \texttt{DecDenu} step}
\label{sec:local-calculus}

To jointly analyze the full operator structure, we first establish the connection between the root-average operator \(\Phi_{u_i,a_i}^{\lambda}\) and partial fraction decomposition for focused single-factor identities. We further introduce two fundamental operational procedures underlying our recursion framework: focused remainder reduction and pole exchange.

Fix the variable \(\lambda\).  The coefficient field \(\mathbb L\) has
characteristic zero and contains the marker variables and all other binding
variables.  Thus every coefficient displayed below is independent of
\(\lambda\), and every \(u\)-symbol denotes a Laurent monomial coefficient.

\begin{proposition} \cite{XinZhangZhang2023}
\label{prop:contribution-average}
Let
\[
 F(\lambda)
 =
 \frac{u_0\lambda^{m_0}}
 {\prod_{i=1}^r(1-u_i\lambda^{a_i})},
 \qquad
 m_0\in\ZZ,\quad a_i\geq1,
\]
where \(u_0,u_i\in\mathbb L^*\), and suppose that the displayed binomials
are pairwise coprime.  Write the partial-fraction decomposition in
\(\lambda\) as
\[
 F(\lambda)
 =
 P(\lambda)+\frac{p(\lambda)}{\lambda^\ell}
 +\sum_{i=1}^r\frac{A_i(\lambda)}{1-u_i\lambda^{a_i}},
 \qquad
 \deg p<\ell,\quad \deg A_i<a_i,
\]
omitting \(p(\lambda)/\lambda^\ell\) when \(F\) has no pole at zero.
The contribution selected by \(1-u_i\lambda^{a_i}\) is \(A_i(0)\), where
\[
 A_i(\lambda)
 \equiv
 (1-u_i\lambda^{a_i})F(\lambda)
 \pmod{1-u_i\lambda^{a_i}}.
\]
Equivalently,
\begin{equation}\label{eq:contribution-average}
\begin{aligned}
 A_i(0)
 &=
 \Phi_{u_i,a_i}^{\lambda}
 \left(
  \frac{u_0\lambda^{m_0}}
  {\prod_{j\ne i}(1-u_j\lambda^{a_j})}
 \right) \\
 &=
 \left(
  \underset{\lambda}{\operatorname{CT}}\,
  \frac{1}{\underline{1-u_i\lambda^{a_i}}}
 \right)
 \frac{u_0\lambda^{m_0}}
 {\displaystyle\prod_{j\ne i}(1-u_j\lambda^{a_j})}
 .
\end{aligned}
\end{equation}
\end{proposition}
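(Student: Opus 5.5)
The plan is to prove the two claims of Proposition~\ref{prop:contribution-average} in sequence: first that the constant term $A_i(0)$ is cut out by the stated congruence, and then that it equals the root average $\Phi_{u_i,a_i}^{\lambda}$ of the cofactor. Throughout, write $G_i(\lambda):=u_0\lambda^{m_0}/\prod_{j\ne i}(1-u_j\lambda^{a_j})$ for the cofactor.

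\emph{Step 1: the congruence.} Multiply the partial-fraction decomposition by $1-u_i\lambda^{a_i}$. Every term other than $A_i$ acquires the factor $1-u_i\lambda^{a_i}$, except that some terms may still have denominators. Those denominators are powers of $\lambda$ and the binomials $1-u_j\lambda^{a_j}$ with $j\neq i$. By pairwise coprimality, and because $1-u_i\lambda^{a_i}$ does not vanish at $0$, all of these denominators are invertible modulo $1-u_i\lambda^{a_i}$. Hence in the ring $\mathbb L[\lambda,\lambda^{-1}]_{\text{loc}}/(1-u_i\lambda^{a_i})$ we get $(1-u_i\lambda^{a_i})F\equiv A_i$. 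Since $\deg A_i<a_i$, $A_i$ is the unique representative of this class, which proves the congruence.

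\emph{Step 2: evaluation at roots.} For each $\omega\in\mathcal R_{a_i}(u_i)$, evaluate the identity $(1-u_i\lambda^{a_i})F=G_i$. This gives $A_i(\omega)=G_i(\omega)$, and $G_i(\omega)$ is well defined because $\omega\neq0$ and coprimality means $\omega$ is not a root of any other binomial. The roots in $\mathcal R_{a_i}(u_i)$ are $a_i$ distinct points: the characteristic is zero, so $u_i\lambda^{a_i}-1$ is separable.

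\emph{Step 3: averaging.} Write $A_i(\lambda)=\sum_{k<a_i}\alpha_k\lambda^k$. Then
$$\frac1{a_i}\sum_{\omega}\omega^k=\omega_0^k\cdot\frac1{a_i}\sum_{\zeta^{a_i}=1}\zeta^k,$$
which vanishes for $0<k<a_i$ and equals $1$ for $k=0$. Averaging Step 2 over all roots therefore gives
$$A_i(0)=\frac1{a_i}\sum_{\omega}G_i(\omega)=\Phi_{u_i,a_i}^{\lambda}G_i,$$
which is \eqref{eq:contribution-average}. The second equality in \eqref{eq:contribution-average} is just the notation \eqref{eq:underlined-pole-average}.

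\emph{Main obstacle.} The delicate point is Step 1: justifying that the polynomial part $P$ and the $\lambda^{-\ell}$ part do not interfere. The cleanest way to handle this is to bypass the congruence ring altogether and evaluate the full decomposition at each root $\omega$ after multiplying by $1-u_i\lambda^{a_i}$. All other terms are regular at $\omega$ and vanish after multiplication, so $A_i(\omega)=G_i(\omega)$ follows directly.
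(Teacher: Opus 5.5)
Your proof is correct. Step~1's congruence in the localized quotient, evaluation at the $a_i$ distinct roots of $u_i\lambda^{a_i}=1$, and the roots-of-unity filter that recovers $A_i(0)$ from a polynomial of degree $<a_i$ together establish both claims. The point you flag as the main obstacle is already handled by Step~1, because $\lambda$ and the other binomials are units modulo $1-u_i\lambda^{a_i}$.

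The paper gives no proof of its own here; it quotes the result from \cite{XinZhangZhang2023}. Your argument is the standard one, and it is the same mechanism the paper uses in the proof of Lemma~\ref{lem:split}: invert the denominator modulo the binomial, then apply the root-of-unity filter.
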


For \(a\geq1\) and \(q\in\ZZ\), let \(\angles{q}_a\) be the unique
integer satisfying
\begin{equation}\label{eq:centered-remainder}
 \angles{q}_a\equiv q\pmod a,
 \qquad
 \angles{q}_a\in(-a/2,a/2],
 \qquad
 [q]_a=\abs{\angles{q}_a}.
\end{equation}
Thus \(0\leq[q]_a\leq a/2\).

\begin{proposition}[\cite{Xin2015}, Focused remainder reduction]
\label{prop:focused-reduction}
Fix the selected factor \(1-u\lambda^a\), and consider
\begin{equation}\label{eq:focused-input}
 F(\lambda)
 =
 \frac{u_0\lambda^{m_0}}
 {(1-u\lambda^a)\prod_{j=1}^r(1-u_j\lambda^{m_j})},
 \qquad
 m_0,m_j\in\ZZ,
\end{equation}
where \(u_0,u,u_j\in\mathbb L^*\).  Put
\[
 b_j=[m_j]_a.
\]
The \(\lambda\)-free coefficient \(\overline u_j\) is determined directly
by the centered reduction
\begin{equation}\label{eq:reduced-local-coefficient}
 u_j\lambda^{m_j}
 \equiv
 \begin{cases}
  \overline u_j\lambda^{b_j},
   &\angles{m_j}_a\geq0,\\[2mm]
  \overline u_j^{-1}\lambda^{-b_j},
   &\angles{m_j}_a<0,
 \end{cases}
 \pmod{1-u\lambda^a}.
\end{equation}
In the second case, use
\begin{equation}\label{eq:local-binomial-inversion}
 \frac{1}{1-\overline u_j^{-1}\lambda^{-b_j}}
 =
 -\frac{\overline u_j\lambda^{b_j}}
        {1-\overline u_j\lambda^{b_j}},
\end{equation}
and absorb the resulting sign and monomial into the numerator.  Reducing
the resulting numerator exponent modulo \(a\) gives
\begin{equation}\label{eq:focused-reduced-function}
 \overline F(\lambda)
 =
 \frac{\overline u_0\lambda^{b_0}}
 {(1-u\lambda^a)
  \prod_{j=1}^r(1-\overline u_j\lambda^{b_j})},
 \qquad
 1\leq b_0\leq a,\quad
 0\leq b_j\leq\frac a2.
\end{equation}
Here \(\overline u_0\) includes the scalar sign produced by the binomial
inversions.  If the ordinary numerator remainder is zero, we take
\(b_0=a\) rather than \(b_0=0\).  A factor with \(b_j=0\) is a
\(\lambda\)-independent factor and remains in the displayed product.  Then, the following congruence holds:
\begin{equation}\label{eq:focused-reduction-congruence}
 (1-u\lambda^a)\overline F(\lambda)
 \equiv
 (1-u\lambda^a)F(\lambda)
 \pmod{1-u\lambda^a}.
\end{equation}
Thus \(F\) and \(\overline F\) have the same selected contribution:
$$
\Phi^{\lambda}_{u, a} (F(\lambda) (1-u \lambda^a)) =
\Phi^{\lambda}_{u, a} (\overline F(\lambda) (1-u \lambda^a))
$$
In particular,
 when  $a=1$
we have
$$
\Phi^{\lambda}_{u, 1} (F(\lambda) (1-u \lambda)) =
F(\lambda) (1-u \lambda) \big|_{\lambda = u^{-1}}.
 $$

\end{proposition}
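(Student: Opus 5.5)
The plan is to work entirely modulo the selected binomial \(1-u\lambda^a\), using that on its root set \(\mathcal R_a(u)\) we have \(\lambda^a=u^{-1}\). Everything reduces to a pointwise statement: for every \(\omega\in\mathcal R_a(u)\), the values of \((1-u\lambda^a)F(\lambda)\) and \((1-u\lambda^a)\overline F(\lambda)\) at \(\omega\) coincide. Both are understood as the cofactors
\[
 G(\lambda)=\frac{u_0\lambda^{m_0}}{\prod_j(1-u_j\lambda^{m_j})}
 \quad\text{and}\quad
 \overline G(\lambda)=\frac{\overline u_0\lambda^{b_0}}{\prod_j(1-\overline u_j\lambda^{b_j})},
\]
assumed regular on \(\mathcal R_a(u)\). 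Averaging over \(\mathcal R_a(u)\) by Definition~\ref{def:phi} then gives equality of the selected contributions. Since the cofactors are rational functions whose denominators do not vanish on the (simple) roots of \(1-u\lambda^a\), equality of values on all roots is the same as the congruence \eqref{eq:focused-reduction-congruence} in the local ring modulo \(1-u\lambda^a\). So I would prove the value equality factor by factor.

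First step: reduce each monomial. Write \(m_j=q_ja+\angles{m_j}_a\). Then modulo \(1-u\lambda^a\) we have \(u_j\lambda^{m_j}\equiv u_ju^{-q_j}\lambda^{\angles{m_j}_a}\). If \(\angles{m_j}_a\geq0\), set \(\overline u_j=u_ju^{-q_j}\). Otherwise set \(\overline u_j=(u_ju^{-q_j})^{-1}\), so that the monomial becomes \(\overline u_j^{-1}\lambda^{-b_j}\). This gives \eqref{eq:reduced-local-coefficient}, with \(\overline u_j\) free of \(\lambda\).

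Second step: handle negative cases. Apply the exact identity \eqref{eq:local-binomial-inversion}, which is a rational-function identity and so valid at every root. It moves a factor \(-\overline u_j\lambda^{b_j}\) into the numerator.

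Third step: reduce the numerator. The accumulated numerator monomial \(c\lambda^{M}\) is reduced by \(\lambda^{M}\equiv u^{-\lfloor (M-1)/a\rfloor}\lambda^{b_0}\) with \(b_0\in[1,a]\). This choice of representative also covers the convention \(b_0=a\) when the remainder is zero.

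Fourth step: check the bounds and the \(a=1\) case. The bounds \(0\le b_j\le a/2\) are immediate from \eqref{eq:centered-remainder}. For \(a=1\), \(\mathcal R_1(u)=\{u^{-1}\}\) and \(\Phi_{u,1}\) is evaluation there.

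The main obstacle I expect is regularity: making sure the reduced denominators \(1-\overline u_j\lambda^{b_j}\) do not vanish on \(\mathcal R_a(u)\), so that both sides are defined. This follows because each reduced factor takes the same value at every root \(\omega\) as the original factor (up to the nonzero factor \(-\overline u_j\omega^{b_j}\) in the inverted case). The original factor is nonzero on the roots by hypothesis. Hence regularity transfers, and the congruence is legitimate in the localized ring \(\mathbb L[\lambda]_{(1-u\lambda^a)}/(1-u\lambda^a)\).
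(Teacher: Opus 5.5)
Your argument is correct and matches the paper's approach. The paper gives no separate proof of this cited proposition; its statement already prescribes the steps you verify pointwise on $\mathcal R_a(u)$: centered reduction of each $u_j\lambda^{m_j}$ via $\lambda^a\equiv u^{-1}$, the exact inversion identity \eqref{eq:local-binomial-inversion}, and reduction of the numerator exponent into $[1,a]$.

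Your regularity transfer is also right: each reduced factor agrees with the original on $\mathcal R_a(u)$ up to the nonzero factor $-\overline u_j\omega^{b_j}$. The only cosmetic fix is to invert all polynomials coprime to $1-u\lambda^a$ rather than write $\mathbb L[\lambda]_{(1-u\lambda^a)}$, since $(1-u\lambda^a)$ need not be a prime ideal.
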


\begin{proposition}[\cite{XinZhangZhang2023},\cite{Xin2015}, Pole exchange]\label{prop:pole-exchange}
Suppose that \(\overline F\) has the form
\eqref{eq:focused-reduced-function}, that some \(b_j \neq 0\), and that its
\(\lambda\)-dependent denominator binomials are pairwise coprime.  Then
\begin{equation}\label{eq:pole-exchange}
\begin{aligned}
&
 \underset{\lambda}{\operatorname{CT}}\,
 \frac{\overline u_0\lambda^{b_0}}
 {\underline{1-u\lambda^a}
  \prod_{\ell=1}^r
  (1-\overline u_\ell\lambda^{b_\ell})}\\
&\qquad
 =
 \sum_{\substack{1\leq j\leq r\\b_j>0}}
 \underset{\lambda}{\operatorname{CT}}\,
 \frac{-\overline u_0\lambda^{b_0}}
 {\underline{(1-\overline u_j\lambda^{b_j})}(1-u\lambda^a)
  \prod_{\ell\ne j}
  (1-\overline u_\ell\lambda^{b_\ell})}.
\end{aligned}
\end{equation}
\end{proposition}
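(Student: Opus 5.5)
The identity \eqref{eq:pole-exchange} reduces to a residue-sum argument on the Riemann sphere for the rational function \(\overline F(\lambda)\) in \(\lambda\). The plan is to use the partial fraction description of selected contributions from Proposition~\ref{prop:contribution-average} and show that all contributions sum to zero.

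The first step is to write
\[
 \overline F(\lambda)=P(\lambda)+\frac{p(\lambda)}{\lambda^\ell}+\frac{A_0(\lambda)}{1-u\lambda^a}+\sum_{b_j>0}\frac{A_j(\lambda)}{1-\overline u_j\lambda^{b_j}},
\]
with \(\deg A_0<a\) and \(\deg A_j<b_j\). The factors with \(b_j=0\) are \(\lambda\)-free constants and are absorbed into the coefficients, so they do not alter the decomposition. This decomposition exists and is unique because the \(\lambda\)-dependent binomials are pairwise coprime by hypothesis. Since \(b_0\geq1\), there is no pole at zero, so the \(p/\lambda^\ell\) term is absent. By Proposition~\ref{prop:contribution-average}, the left side of \eqref{eq:pole-exchange} equals \(A_0(0)\). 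Each summand on the right equals \(-A_j(0)\), the selected contribution of \(1-\overline u_j\lambda^{b_j}\) multiplied by \(-1\). The claim is therefore equivalent to
\[
 A_0(0)+\sum_{b_j>0}A_j(0)=0.
\]

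The second step evaluates \(\overline F(0)\) in two ways. Setting \(\lambda=0\) in the decomposition gives \(\overline F(0)=P(0)+A_0(0)+\sum_j A_j(0)\). On the other hand, \(\overline F(0)=0\) directly, because the numerator \(\overline u_0\lambda^{b_0}\) has \(b_0\geq1\) and every denominator factor equals \(1\) at \(\lambda=0\).

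The main point, and the only delicate one, is showing \(P=0\), i.e.\ that \(\overline F\) is proper. The degree of the numerator is \(b_0\leq a\). The denominator has degree \(a+\sum_{b_j>0}b_j\), and this is at least \(a+1>b_0\) because some \(b_j\neq0\). Hence \(\overline F(\lambda)\to0\) as \(\lambda\to\infty\). Every other term in the decomposition also vanishes at infinity, since \(\deg A_0<a\) and \(\deg A_j<b_j\). So the polynomial part \(P\) tends to \(0\) at infinity and must be zero.

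This is precisely where the normalization \(b_0=a\) rather than \(0\), together with the existence of some \(b_j>0\), enters. It explains the hypotheses: if every \(b_j\) were \(0\), a numerator of degree \(b_0=a\) would leave a nonzero constant polynomial part and the identity would fail. Combining \(P=0\) with \(\overline F(0)=0\) gives the required vanishing of the sum of contributions, which finishes the proof.
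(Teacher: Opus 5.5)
Your proof is correct and follows the paper's own argument: the paper justifies the identity by noting that \(\overline F\) is proper and vanishes at \(\lambda=0\), so \(A_0(0)+\sum_{b_j>0}A_j(0)=0\). You make the degree count \(b_0\le a<a+\sum_{b_j>0}b_j\) and the evaluation at zero explicit, and your opening mention of a residue sum on the Riemann sphere is only framing, since the argument you actually give is the partial-fraction evaluation at \(\lambda=0\).
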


Indeed, \(\overline F\) is proper and vanishes at zero, so the sum of all
its partial-fraction contributions is zero.   Thus every nonzero
remainder \(b_j\) produces one child with focused modulus
\(b_j\leq a/2\), whereas \(b_j=0\) produces no child.

\begin{proposition}[\cite{XinZhangZhang2023}, Coprime multiplier transfer]
\label{prop:coprime-multiplier}
If \(k\geq1\), \(\gcd(k,a)=1\), and the displayed cofactors are defined on
their selected root sets, then
\begin{equation}\label{eq:coprime-multiplier}
\begin{aligned}
&
 \left(
  \underset{\lambda}{\operatorname{CT}}\,
  \frac{1}{\underline{1-u\lambda^a}}
 \right)
 \frac{u_0\lambda^{m_0}}
 {\displaystyle\prod_{j=1}^r(1-u_j\lambda^{m_j})}\\
&\qquad
 =
 \left(
  \underset{\lambda}{\operatorname{CT}}\,
  \frac{1}{\underline{1-u^{1/k}\lambda^a}}
 \right)
 \frac{u_0\lambda^{km_0}}
 {\displaystyle\prod_{j=1}^r(1-u_j\lambda^{km_j})},
\end{aligned}
\end{equation}
where \(u^{1/k}\) denotes a fixed \(k\)th root of \(u\) in an algebraic
extension of \(\mathbb L\).
\end{proposition}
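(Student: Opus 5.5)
The plan is to compare the two root sets directly through the substitution \(\lambda\mapsto\lambda^k\). Write \(v:=u^{1/k}\), so \(v^k=u\), and let
\[
 G(\lambda):=\frac{u_0\lambda^{m_0}}{\prod_{j=1}^r(1-u_j\lambda^{m_j})}.
\]
By Definition~\ref{def:phi} and the notation \eqref{eq:underlined-pole-average}, the left side of \eqref{eq:coprime-multiplier} is \(\Phi_{u,a}^{\lambda}G(\lambda)\). The right side is \(\Phi_{v,a}^{\lambda}G(\lambda^k)\), since replacing \(\lambda\) by \(\lambda^k\) in every factor of \(G\) gives exactly the displayed cofactor. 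So it suffices to prove
\[
 \sum_{\omega\in\mathcal R_a(v)}G(\omega^k)=\sum_{\eta\in\mathcal R_a(u)}G(\eta),
\]
because both sides carry the same normalization \(1/a\).

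The key step is to show that the map \(\pi:\omega\mapsto\omega^k\) is a bijection from \(\mathcal R_a(v)\) onto \(\mathcal R_a(u)\).

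\emph{It maps into the target.} If \(v\omega^a=1\), then raising to the \(k\)th power gives \(u(\omega^k)^a=v^k\omega^{ak}=1\).

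\emph{It is a bijection.} Fix \(\omega_0\in\mathcal R_a(v)\). By Definition~\ref{def:phi},
\[
 \mathcal R_a(v)=\omega_0\mu_a,\qquad \mathcal R_a(u)=\omega_0^k\mu_a,
\]
where \(\mu_a\) is the group of \(a\)th roots of unity in \(\overline{\mathbb L}\). Since \(\mathbb L\) has characteristic zero, \(\mu_a\) is cyclic of order exactly \(a\). Then \(\pi(\omega_0\zeta)=\omega_0^k\zeta^k\), so \(\pi\) corresponds to the endomorphism \(\zeta\mapsto\zeta^k\) of \(\mu_a\). This endomorphism is an automorphism precisely because \(\gcd(k,a)=1\). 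This is the only point where coprimality is used, and it is also why the non-coprime case needs the separate treatment of Section~\ref{sec:noncoprime}.

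With the bijection in hand, reindexing the sum gives the displayed equality. Well-definedness transfers along \(\pi\): \(G(\lambda^k)\) is defined at each \(\omega\in\mathcal R_a(v)\) if and only if \(G\) is defined at \(\pi(\omega)\in\mathcal R_a(u)\). This matches the hypothesis that the displayed cofactors are defined on their root sets.

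The main obstacle is only a bookkeeping point rather than a mathematical one. The value \(\Phi_{v,a}^{\lambda}G(\lambda^k)\) a priori lies in \(\mathbb L(v)\), not in \(\mathbb L\). The equality with the left side shows that it lies in \(\mathbb L\) and does not depend on which \(k\)th root \(v\) of \(u\) is fixed. Different choices of \(v\) differ by a \(k\)th root of unity, and each choice gives the same sum by the argument above. No extra work is needed beyond noting this.
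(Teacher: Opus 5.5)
Your proof is correct and takes the same approach as the paper, which also argues that \(\omega\mapsto\omega^k\) is a bijection from the roots of \(u^{1/k}\lambda^a=1\) onto those of \(u\lambda^a=1\) when \(\gcd(k,a)=1\), so the identity is just a reindexing of the finite root average. Your coset description \(\mathcal R_a(v)=\omega_0\mu_a\), \(\mathcal R_a(u)=\omega_0^k\mu_a\) supplies the bijectivity check that the paper leaves implicit.
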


The map from the roots of \(u^{1/k}\lambda^a=1\) to those of
\(u\lambda^a=1\), given by \(\omega\mapsto\omega^k\), is a bijection
because \(\gcd(k,a)=1\).  Hence \eqref{eq:coprime-multiplier} is just a
reindexing of the finite root average; see
\cite[Proposition~4]{XinZhangZhang2024}.

One coprime \texttt{DecDenu} step consists of the following named
operations:
\[
\begin{aligned}
 \text{coprime multiplier transfer}
 &\longrightarrow \text{focused remainder reduction}\\
 &\longrightarrow \text{pole exchange}.
\end{aligned}
\]
After the first operation, the focused remainder reduction uses
\(b_j=[km_j]_a\), and every nonzero \(b_j\) gives one pole child through
\eqref{eq:pole-exchange}.  The coprimality condition is essential:
if \(\gcd(k,a)>1\), exponentiation by \(k\) does not permute the \(a\)
roots.  The next section replaces the unavailable multiplier transfer by
an exact split.

All these operations act only on the focused variable \(\lambda\).
In the successive problem \eqref{eq:successive-ct-problem}, every other
root average remains attached to the same term.

\section{The non-coprime multiplier split}\label{sec:noncoprime}

The split is an identity for the one-variable pole operator and does not
depend on Smith normalization.  We therefore establish it before introducing
the form of a recursive term.

\begin{lemma}\label{lem:unit-lift}
Let $0<\abs{k}<s$ and put
\begin{equation}\label{eq:g-b-k0}
 g=\gcd(k,s),\qquad b=s/g,\qquad
 \kappa_0\equiv k/g\pmod b.
\end{equation}
Using gcd and the Chinese remainder theorem, one can compute an integer
$\kappa$ such that
\begin{equation}\label{eq:kappa-lift}
 1\leq\kappa<s,\qquad
 \kappa\equiv\kappa_0\pmod b,\qquad
 \gcd(\kappa,s)=1.
\end{equation}
No prime factorization of $s$ is required.
\end{lemma}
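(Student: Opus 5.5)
The plan is to reduce everything to one Chinese-remainder problem with two coprime moduli, \(b\) and the largest divisor of \(s\) that is coprime to \(b\). That divisor can be isolated by repeated gcd's, without factoring \(s\).

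First I record that \(\gcd(\kappa_0,b)=1\). Indeed \(\gcd(k/g,\,s/g)=1\) by the definition of \(g\), and \(\kappa_0\equiv k/g\pmod b\). Since \(0<\abs{k}<s\), we also have \(s\geq 2\) and \(g<s\), so \(b\geq 2\), although \(b=1\) would not break the argument.

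Next I compute the \(b\)-free part of \(s\) by the loop
\[
 c\leftarrow s;\qquad \text{while } \gcd(c,b)>1:\ c\leftarrow c/\gcd(c,b).
\]
Each pass divides \(c\) by an integer at least \(2\), so there are at most \(\log_2 s\) iterations, each a single gcd. At termination \(c\mid s\) and \(\gcd(c,b)=1\). Moreover, every prime \(p\mid s\) with \(p\nmid b\) still divides \(c\), because only common factors with \(b\) were ever removed. Thus every prime divisor of \(s\) divides \(b\) or \(c\). Since \(b\mid s\), \(c\mid s\) and \(\gcd(b,c)=1\), we get \(bc\mid s\).

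By the Chinese remainder theorem, computed with the extended Euclidean algorithm, I take the unique \(\kappa\in[1,bc]\) with
\[
 \kappa\equiv\kappa_0\pmod b,\qquad \kappa\equiv 1\pmod c.
\]
To check \(\gcd(\kappa,s)=1\), let \(p\mid s\) be prime.
\begin{itemize}
\item If \(p\mid b\), then \(\kappa\equiv\kappa_0\not\equiv0\pmod p\), because \(\gcd(\kappa_0,b)=1\).
\item Otherwise \(p\mid c\), and then \(\kappa\equiv1\pmod p\).
\end{itemize}
For the range, \(\kappa\leq bc\leq s\). The value \(\kappa=s\) is impossible, since it would force \(\gcd(\kappa,s)=s\geq2\). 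Hence \(1\leq\kappa<s\).

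The only step that needs care is showing that the iterated-gcd loop captures every prime of \(s\) not dividing \(b\), and thereby avoids factorization. The argument above settles this, since divisions by \(\gcd(c,b)\) only ever remove primes of \(b\). All operations are gcd's, exact divisions, and one CRT combination, so the computation runs in time polynomial in \(\log s\).
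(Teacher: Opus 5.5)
Your proof is correct and follows the paper's route: your loop output \(c\) is exactly the paper's \(g'=g/g_\infty\), the largest divisor of \(s\) (equivalently of \(g\)) coprime to \(b\), and you solve the same CRT system \(\kappa\equiv\kappa_0\pmod b\), \(\kappa\equiv 1\pmod{c}\). The differences are minor: the paper extracts \(g_\infty\) by the gcd-squaring iteration \(h_{i+1}=\gcd(g,h_i^2)\), which uses \(O(\log\log s)\) gcd computations against your \(O(\log s)\), and your explicit checks that \(\gcd(\kappa_0,b)=1\) and that \(\kappa=s\) is impossible fill in two points the paper leaves implicit.
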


\begin{proof}
Set $h_0=\gcd(g,b)$ and iterate
\begin{equation}\label{eq:gcd-iteration}
 h_{i+1}=\gcd(g,h_i^2)
\end{equation}
until the value stabilizes; call the final value $g_\infty$.  For a prime dividing
both $g$ and $b$, its exponent in $h_i$ doubles until it reaches its full
exponent in $g$.  Thus $g_\infty$ is the largest divisor of $g$ supported
on the primes dividing $b$, and $g'=g/g_\infty$ is coprime to $b$.  The iteration takes
$O(\log\log s)$ gcd computations.

Because $\gcd(\kappa_0,b)=1$, solve
\[
 \kappa\equiv\kappa_0\pmod b,
 \qquad
 \kappa\equiv1\pmod{g'}
\]
by the Chinese remainder theorem.  Its least positive representative is
smaller than $bg'\leq bg=s$.  Primes of $g$ that also divide $b$ do not
divide $\kappa$, and primes in $g'$ do not divide $\kappa$ either.  Hence
$\gcd(\kappa,s)=1$.
\end{proof}

\begin{lemma}[Exact non-coprime split]\label{lem:split}
Use the notation of Lemma~\ref{lem:unit-lift}. Since $\gcd(\kappa,s)=1$, we choose
$1\leq e<s$ and $t\in\ZZ$ such that
\begin{equation}\label{eq:e-t}
 e\kappa=1+ts.
\end{equation}
%The $s$ roots of $u\lambda^s=1$ are in bijection with the common roots of
%\begin{equation}\label{eq:split-equations}
% \nv_2^b=\nv_1,\qquad u^e\nv_1^g=1,
%\end{equation}
%through
%\begin{equation}\label{eq:split-substitution}
% \lambda=u^t\nv_2^\kappa.
%\end{equation}
Consequently, whenever \(H(\lambda)\in\mathbb L(\lambda)\) is defined at
these roots,
\begin{equation}\label{eq:split-phi}
 \Phi_{u,s}^{\lambda}H(\lambda)
 =\Phi_{u^e,g}^{\nv_1}\Phi_{\nv_1^{-1},b}^{\nv_2}
   H(u^t\nv_2^\kappa).
\end{equation}
All exponents in this identity are \emph{integers}.
\end{lemma}

\begin{proof}
The denominator of \(H\) is relatively prime to \(1-u\lambda^s\), and is
therefore invertible in
\(\mathbb L[\lambda,\lambda^{-1}]/(1-u\lambda^s)\).  Hence \(H\) has a
Laurent-polynomial representative on the root set.  By linearity, it
suffices to consider the case
\[
H(\lambda)=\lambda^q.
\]
Using the standard root-of-unity filter, we obtain
\[
\mathrm{LHS}
 =\frac{1}{s}\sum_{\omega^s=u^{-1}}\omega^q
 =
 \begin{cases}
  u^{-q/s}, & s\mid q,\\
  0,        & s\nmid q.
 \end{cases}
\]

We now compute the right-hand side:
\[
\mathrm{RHS}
=
\Phi_{u^e,g}^{\nv_1}
\Phi_{\nv_1^{-1},b}^{\nv_2}
\left(u^{tq}\nv_2^{\kappa q}\right).
\]
Applying the inner operator first gives
\[
\begin{aligned}
\Phi_{\nv_1^{-1},b}^{\nv_2}
\left(u^{tq}\nv_2^{\kappa q}\right)
&=
\frac{u^{tq}}{b}
\sum_{\omega_2^b=\nv_1}\omega_2^{\kappa q} \\
&=
\begin{cases}
u^{tq}\nv_1^{\kappa q/b}, & b\mid \kappa q,\\
0,                         & b\nmid \kappa q.
\end{cases}
\end{aligned}
\]

Suppose first that \(b\nmid\kappa q\). Since
\(\gcd(\kappa,s)=1\) and \(b\mid s\), we have
\(\gcd(\kappa,b)=1\). Thus \(b\nmid q\), and hence \(s\nmid q\).
Consequently, both sides are zero.

Now suppose that \(b\mid\kappa q\). Applying the outer operator yields
\[
\begin{aligned}
\mathrm{RHS}
&=
\frac{u^{tq}}{g}
\sum_{\omega_1^g=u^{-e}}
\omega_1^{\kappa q/b} \\
&=
\begin{cases}
u^{tq}(u^{-e})^{\kappa q/(bg)},
  & g\mid \dfrac{\kappa q}{b},\\[6pt]
0,
  & g\nmid \dfrac{\kappa q}{b}.
\end{cases}
\end{aligned}
\]

If \(g\nmid \kappa q/b\), then \(s=bg\nmid\kappa q\). Since
\(\gcd(\kappa,s)=1\), it follows that \(s\nmid q\), so once again both
sides vanish.

Finally, suppose that
\[
g\mid\frac{\kappa q}{b}.
\]
Then \(s=bg\mid\kappa q\), and therefore \(s\mid q\). Using
\(e\kappa=1+ts\) and \(s=bg\), we have
\[
\frac{e\kappa q}{bg}
=
\frac{e\kappa q}{s}
=
\frac{(1+ts)q}{s}
=
\frac{q}{s}+tq.
\]
Hence
\[
\begin{aligned}
\mathrm{RHS}
&=
u^{tq}(u^{-e})^{\kappa q/(bg)}\\
&=
u^{tq-e\kappa q/s}\\
&=
u^{tq-(q/s+tq)}
=
u^{-q/s},
\end{aligned}
\]
which agrees with the left-hand side. Therefore the desired identity
holds.
\end{proof}

\section{Joint root averages and recursive normalization}
\label{sec:recursive-terms}

We define the standard normal form of a non-terminal term as
\begin{equation}\label{eq:normal-term}
 \Phi_{u_1,s_1}^{\lambda_1}\cdots
 \Phi_{u_h,s_h}^{\lambda_h}
 \left(
  \frac{u_0\Lambda^{\mathbf c_0}}
       {\prod_{\ell=1}^{r}
        \left(1-u_{h+\ell}\Lambda^{\mathbf c_\ell}\right)}
 \right),
\end{equation}
where
\begin{equation}\label{eq:smith-chain}
 S=\diag(s_1,\ldots,s_h),
 \qquad
 1<s_1\mid s_2\mid\cdots\mid s_h,
\end{equation}
and
\[
 U=(u_1,\ldots,u_h),
 \qquad
 \Lambda=(\lambda_1,\ldots,\lambda_h).
\]
The exponent vectors \(\mathbf c_0,\mathbf c_1,\ldots,\mathbf c_r\in\mathbb Z^h\) satisfy the following centering condition:
\begin{equation}\label{eq:normalized-centered-range}
 -\frac{s_i}{2}
 <
 (\mathbf c_\ell)_i
 \leq
 \frac{s_i}{2}
 \qquad
 (1\leq i\leq h,\ 0\leq\ell\leq r).
\end{equation}

Let
$
 C:=(\mathbf c_1,\ldots,\mathbf c_r),
 $
 $
 D:=\abs{\det S}=s_1\cdots s_h.
$
Whenever \(C\) fail to satisfy the above normalization requirements, we apply the focused remainder reduction (Proposition~\ref{prop:focused-reduction}) coordinatewise over all variables \(\lambda_1,\dots,\lambda_h\) to the expression \eqref{eq:normal-term}.

If $h=0$, the resulting Laurent rational function in marker variables alone constitutes an output term of the recursion.

\subsection{Joint realization and normalization of a pole-exchange child}

Consider the nonterminal normal-form term \eqref{eq:normal-term}.  Focus on the last operator
\(\Phi_{u_h,s_h}^{\lambda_h}\), and abbreviate
\[
\begin{gathered}
 \lambda:=\lambda_h,
 \qquad
 u:=u_h,
 \qquad
 s:=s_h,
 \qquad
 \Lambda_{h-1}:=(\lambda_1,\ldots,\lambda_{h-1}),\\
 \mathbf c_\ell
 =
 \begin{pmatrix}
  \mathbf c'_\ell\\
  \gamma_\ell
 \end{pmatrix},
 \qquad
 \gamma_\ell:=(\mathbf c_\ell)_h
 \quad(0\leq\ell\leq r),
 \qquad
 \boldsymbol\gamma:=(\gamma_1,\ldots,\gamma_r).
\end{gathered}
\]

Choose an integer \(k\) satisfying \(0<\abs{k}<s\).  Put
\(g=\gcd(k,s)\) and \(b=s/g\), and let \(\kappa,e,t\) be the integers
supplied by Lemma~\ref{lem:split}.  The exact split gives
\begin{equation}\label{eq:focused-split-recalled}
 \Phi_{u,s}^{\lambda}G(\Lambda)
 =
 \Phi_{u^e,g}^{\nv_h}
 \Phi_{\nv_h^{-1},b}^{\nv_{h+1}}
 G(\Lambda_{h-1},u^t\nv_{h+1}^{\kappa}),
\end{equation}
where
\[
 G\bigl(\Lambda_{h-1},u^t\nv_{h+1}^{\kappa}\bigr)
 =
 \frac{u_0\Lambda_{h-1}^{\mathbf c'_0}
       (u^t\nv_{h+1}^{\kappa})^{\gamma_0}}
 {\displaystyle\prod_{\ell=1}^{r}
  \left(1-u_{h+\ell}\Lambda_{h-1}^{\mathbf c'_\ell}
       (u^t\nv_{h+1}^{\kappa})^{\gamma_\ell}\right)}.
\]

Apply Proposition~\ref{prop:focused-reduction} to the inner operator in
\eqref{eq:focused-split-recalled}, and use its output notation
\begin{equation}\label{eq:focused-reduction-output}
\begin{aligned}
&\Phi_{\nv_h^{-1},b}^{\nv_{h+1}}
 G\bigl(\Lambda_{h-1},u^t\nv_{h+1}^{\kappa}\bigr)\\
&\qquad={}
 \Phi_{\nv_h^{-1},b}^{\nv_{h+1}}
 \bar G(\Lambda_{h-1},\nv_h,\nv_{h+1}),
\end{aligned}
\end{equation}
where
\begin{equation}\label{eq:focused-output-data}
 \bar G(\Lambda_{h-1},\nv_h,\nv_{h+1})
 =
 \frac{w_0\nv_{h+1}^{p_0}}
 {\displaystyle\prod_{\ell=1}^{r}
  \left(1-w_{h+\ell}\nv_{h+1}^{p_\ell}\right)},
 \qquad
 p_\ell=[\kappa\gamma_\ell]_b.
\end{equation}
Here \(1\leq p_0\leq b\), \(0\leq p_\ell\leq b/2\), and
\(w_0,w_{h+1},\ldots,w_{h+r}\) are Laurent monomials in the marker
variables $\y$ and the outer binding variables \(\Lambda_{h-1},\nv_h\),
independent of \(\nv_{h+1}\).  These symbols denote exactly the data
produced by Proposition~\ref{prop:focused-reduction};
For later reference, we record
\begin{equation}\label{eq:child-focused-order}
 p_\ell=[\kappa\gamma_\ell]_b.
\end{equation}

For a recursive state, at least one \(p_j\) is positive.  Indeed,
\(b\geq2\) and \(\gcd(\kappa,b)=1\); if every \(p_j\) vanished, then
\(b\) would divide \(s,\gamma_1,\ldots,\gamma_r\), contrary to
\eqref{eq:focused-primitivity}.

The regularity and coprimality assumptions of
Proposition~\ref{prop:pole-exchange} will be verified for recursive terms
in Lemma~\ref{lem:regularity-coprimality}.  That proposition gives
\begin{equation}\label{eq:pole-exchange-children}
 \Phi_{\nv_h^{-1},b}^{\nv_{h+1}}
 \bar G(\Lambda_{h-1},\nv_h,\nv_{h+1})
 =
 \sum_{p_j>0}
 \Phi_{w_{h+j},p_j}^{\nv_{h+1}}
 H_j(\Lambda_{h-1},\nv_h,\nv_{h+1}),
\end{equation}
where
\begin{equation}\label{eq:pole-exchange-cofactor}
 H_j(\Lambda_{h-1},\nv_h,\nv_{h+1})
 :=
 -\frac{\left(1-w_{h+j}\nv_{h+1}^{p_j}\right)
          \bar G(\Lambda_{h-1},\nv_h,\nv_{h+1})}
        {1-\nv_h^{-1}\nv_{h+1}^{b}}.
\end{equation}

Relabel the ordered raw binding tuple as
\[
 \V=(\nv_1,\ldots,\nv_{h+1})
 :=
 (\lambda_1,\ldots,\lambda_{h-1},\nv_h,\nv_{h+1}),
 \qquad
 \V_{h-1}:=(\nv_1,\ldots,\nv_{h-1}).
\]
For each \(j\) with \(p_j>0\), write the selected reduced coefficient
uniquely as
\begin{equation}\label{eq:selected-reduced-pole-data}
 w_{h+j}
 =
 \widehat u_{h+j}
 \V_{h-1}^{\mathbf{\eta}_j}
 \nv_h^{\tau_j},
 \qquad
 \widehat u_{h+j}\ \text{marker-only},\quad
 \mathbf{\eta}_j\in\ZZ^{h-1},\quad
 \tau_j\in\ZZ.
\end{equation}
Combining the outer operators, the corresponding child is
\begin{equation}\label{eq:pole-exchange-child-iterated}
 \mathcal C_j
 :={}
 \Phi_{u_1,s_1}^{\nv_1}\cdots
 \Phi_{u_{h-1},s_{h-1}}^{\nv_{h-1}}
 \Phi_{u^e,g}^{\nv_h}
 \Phi_{
  \widehat u_{h+j}
  \V_{h-1}^{\mathbf{\eta}_j}\nv_h^{\tau_j},
  p_j
 }^{\nv_{h+1}}
 H_j(\V).
\end{equation}
Here \(0<p_j\leq b/2\).  The operators in
\eqref{eq:pole-exchange-child-iterated} act from right to left.

In conclusion, we obtain
\[
\Phi_{u_1,s_1}^{\lambda_1}\cdots
 \Phi_{u_h,s_h}^{\lambda_h}G(\Lambda)  =  \sum_{p_j >0}  \mathcal C_j.
\]

  A factor
with \(p_j=0\) remains in the common cofactor and creates no child;
different nonzero summands are normalized separately.

\subsection{Joint root averages and raw recursive terms}

Let \(q\geq1\), let
\[
 \V=(\nv_1,\ldots,\nv_q),
 \qquad
 U=(u_1,\ldots,u_q),
\]
and suppose that every \(u_i\) is a nonzero Laurent monomial in the marker
variables.  Let
\[
 B=(\boldsymbol\beta_1,\ldots,\boldsymbol\beta_q)
 \in\ZZ^{q\times q}
\]
be nonsingular, with exponent columns
\(\boldsymbol\beta_i\in\ZZ^q\).  The associated carried equations are
\begin{equation}\label{eq:general-pole-system}
 u_i\V^{\boldsymbol\beta_i}=1
 \qquad(1\leq i\leq q),
\end{equation}
and their common-root set is
\begin{equation}\label{eq:joint-root-set}
 \mathcal R_B(U)
 :=
 \left\{
  \boldsymbol\omega\in(\overline\K^{\,*})^q:
  u_i\boldsymbol\omega^{\boldsymbol\beta_i}=1
  \text{ for }1\leq i\leq q
 \right\}.
\end{equation}

\begin{definition}[Joint root average]\label{def:joint-phi}
If a rational function \(H(\V)\) is defined at every point of
\(\mathcal R_B(U)\), set
\begin{equation}\label{eq:joint-root-average}
 \Psi_{U,B}^{\V}H(\V)
 :=
 \frac{1}{\abs{\det B}}
 \sum_{\boldsymbol\omega\in\mathcal R_B(U)}
 H(\boldsymbol\omega).
\end{equation}
\end{definition}

When \(B=S=\diag(s_1,\ldots,s_q)\), the equations are independent and
\[
 \mathcal R_S(U)
 =
 \mathcal R_{s_1}(u_1)\times\cdots\times
 \mathcal R_{s_q}(u_q).
\]
Consequently, finite summation and Definition~\ref{def:joint-phi} give
\begin{equation}\label{eq:joint-successive-compatibility}
 \Psi_{U,S}^{\Lambda}H(\Lambda)
 =
 \Phi_{u_1,s_1}^{\lambda_1}\cdots
 \Phi_{u_q,s_q}^{\lambda_q}H(\Lambda),
 \qquad
 \Lambda=(\lambda_1,\ldots,\lambda_q).
\end{equation}
 Its value is independent of the order. For nondiagonal systems with coupled variables, the joint average cannot be decomposed into commuting one-variable averages. When expressed as nested one-variable averages, the rightmost operator is applied first.

Suppose
$\Psi_{U,B}^{\V} F(\V)$
 is regular, i.e.,
that $F(\V)$ is well defined on \(\mathcal R_B(U)\).
We use the Smith normal form theorem: for every nonsingular integer matrix \(B\), one can compute
\(P,Q\in\operatorname{GL}_q(\ZZ)\) such that \(PBQ\) is diagonal with
positive divisibility-ordered entries; see \cite{KannanBachem1979}.  The
following result records the induced reindexing of the finite root
average.

\begin{theorem}[Smith normalization of a joint root average]
\label{thm:smith-normalization}
Let \(B\in\ZZ^{q\times q}\) be nonsingular, and choose
\(P,Q\in\operatorname{GL}_q(\ZZ)\) such that
\begin{equation}\label{eq:smith-reduction}
 PBQ=S:=
 \diag(s_1,\ldots,s_q),
 \qquad
 1\leq s_1\mid s_2\mid\cdots\mid s_q.
\end{equation}
For \(1\leq j\leq q\), define
\begin{equation}\label{eq:smith-carried-coefficients}
 \widetilde u_j := U^{Q\mathbf e_j} = \prod_{i=1}^q u_i^{Q_{ij}},
 \qquad
 \widetilde U := (\widetilde u_1,\ldots,\widetilde u_q).
\end{equation}
Let \(\Lambda=(\lambda_1,\ldots,\lambda_q)\), and define
\begin{equation}\label{eq:smith-variable-change}
 \V_P(\Lambda)
 :=
 \left(
  \Lambda^{P\mathbf e_1},\ldots,
  \Lambda^{P\mathbf e_q}
 \right).
\end{equation}
Then the system \eqref{eq:general-pole-system} is carried bijectively to
\begin{equation}\label{eq:diagonal-pole-system}
 \widetilde u_j\lambda_j^{s_j}=1
 \qquad(1\leq j\leq q).
\end{equation}
In particular,
\[
 \abs{\mathcal R_B(U)}
 =
 \abs{\det B}
 =
 s_1\cdots s_q.
\]
For every rational function \(H\) defined on \(\mathcal R_B(U)\), one
has
\begin{equation}\label{eq:smith-average}
 \Psi_{U,B}^{\V}H(\V)
 =
 \Psi_{\widetilde U,S}^{\Lambda}
 H\bigl(\V_P(\Lambda)\bigr).
\end{equation}
\end{theorem}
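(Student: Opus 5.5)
The plan is to realize the change of variables \(\V=\V_P(\Lambda)\) as an automorphism of the algebraic torus \((\overline\K^{\,*})^q\). We then show that it carries the system \eqref{eq:general-pole-system} onto an equivalent system which, after a unimodular recombination of the equations by \(Q\), is exactly the diagonal system \eqref{eq:diagonal-pole-system}. The averaging identity \eqref{eq:smith-average} then follows by reindexing a finite sum, because the normalizing constants agree.

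First, define \(\phi_P:(\overline\K^{\,*})^q\to(\overline\K^{\,*})^q\) by \(\phi_P(\Lambda)=\V_P(\Lambda)\), so that \(\nv_i=\Lambda^{P\mathbf e_i}=\prod_k\lambda_k^{P_{ki}}\). This map is a group homomorphism with \(\phi_{P'}\circ\phi_P=\phi_{PP'}\) (up to the order convention, which I will fix carefully). Hence \(\phi_{P^{-1}}\) inverts it, and it is a bijection because \(P\in\operatorname{GL}_q(\ZZ)\).

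The key exponent computation is \(\V_P(\Lambda)^{\boldsymbol\beta}=\prod_i\Lambda^{\beta_iP\mathbf e_i}=\Lambda^{P\boldsymbol\beta}\). Consequently, \(\boldsymbol\omega=\phi_P(\Lambda)\) satisfies \(u_i\boldsymbol\omega^{\boldsymbol\beta_i}=1\) for all \(i\) if and only if \(u_i\Lambda^{PB\mathbf e_i}=1\) for all \(i\).

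Next, recombine these equations using \(Q\). Put \(E_i:=u_i\Lambda^{PB\mathbf e_i}\). Because \(Q\) is invertible over \(\ZZ\), the conditions \(E_i=1\) for all \(i\) are equivalent to \(\prod_iE_i^{Q_{ij}}=1\) for all \(j\). The reverse direction uses the entries of \(Q^{-1}\), which are integers. The \(j\)th recombined equation is
\[
 \prod_iE_i^{Q_{ij}}=U^{Q\mathbf e_j}\Lambda^{PBQ\mathbf e_j}=\widetilde u_j\lambda_j^{s_j}.
\]
So \(\phi_P\) restricts to a bijection \(\mathcal R_S(\widetilde U)\to\mathcal R_B(U)\), where \(\mathcal R_S(\widetilde U)=\mathcal R_{s_1}(\widetilde u_1)\times\cdots\times\mathcal R_{s_q}(\widetilde u_q)\).

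In characteristic zero each polynomial \(\widetilde u_j\lambda^{s_j}-1\) is separable with nonzero roots, so it has exactly \(s_j\) roots. This gives \(\abs{\mathcal R_B(U)}=s_1\cdots s_q=\abs{\det S}=\abs{\det B}\), since \(\det P,\det Q=\pm1\).

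Finally, if \(H\) is defined on \(\mathcal R_B(U)\), then \(H\circ\V_P\) is defined on \(\mathcal R_S(\widetilde U)\). Summing over the bijection and dividing by the equal constants \(\abs{\det B}=\abs{\det S}\) yields \eqref{eq:smith-average}.

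The main obstacle is purely bookkeeping: matching the row/column convention of \(\Lambda^{P\mathbf e_i}\) so that the exponent matrix of the transformed system is \(PB\), and not \(P^{\mathsf T}B\) or \(BP\). I would verify this first on the single identity \(\V_P(\Lambda)^{\boldsymbol\beta}=\Lambda^{P\boldsymbol\beta}\), and check that composition of the maps \(\phi_P\) is compatible with matrix multiplication. Everything else is formal.
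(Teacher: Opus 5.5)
Your proposal is correct and follows essentially the same route as the paper: a unimodular recombination of the equations by \(Q\), the monomial torus automorphism \(\V=\V_P(\Lambda)\) with inverse \(\lambda_k=\V^{P^{-1}\mathbf e_k}\) turning the \(j\)th combined equation into \(\widetilde u_j\lambda_j^{s_j}=1\), root counting via \(\abs{\det P}=\abs{\det Q}=1\), and reindexing the finite sum. The only difference is that you substitute before recombining whereas the paper recombines first, which is immaterial; your bookkeeping identities \(\V_P(\Lambda)^{\boldsymbol\beta}=\Lambda^{P\boldsymbol\beta}\) and \(\phi_{P'}\circ\phi_P=\phi_{PP'}\) are both correct as stated.
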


\begin{proof}
Right multiplication by \(Q\) replaces the original equations by the
unimodular multiplicative combinations
\begin{equation}\label{eq:combined-equation}
 \prod_{i=1}^q
 \left(u_i\V^{\boldsymbol\beta_i}\right)^{Q_{ij}}
 =1
 \qquad(1\leq j\leq q).
\end{equation}
Because \(Q^{-1}\) also has integer entries, the original equations are
integral multiplicative combinations of these new equations.  The two
systems therefore have the same common-root set.

The substitution \(\V=\V_P(\Lambda)\) is an automorphism of
\((\overline\K^{\,*})^q\).  Its inverse is the monomial map
\[
 \lambda_k=\V^{P^{-1}\mathbf e_k}
 \qquad(1\leq k\leq q).
\]
Under this substitution, the \(j\)th combined equation becomes
\[
 \widetilde u_j\Lambda^{P(BQ)\mathbf e_j}
 =
 \widetilde u_j\lambda_j^{s_j}
 =1,
 \qquad
 P(BQ)\mathbf e_j=S\mathbf e_j=s_j\mathbf e_j.
\]
Thus the monomial automorphism gives the asserted bijection with the
diagonal root set.

The \(j\)th diagonal equation has exactly \(s_j\) distinct nonzero roots,
so the diagonal system has \(s_1\cdots s_q\) common roots.  Taking
determinants in \eqref{eq:smith-reduction} gives
\[
 s_1\cdots s_q=\abs{\det B},
\]
because \(\abs{\det P}=\abs{\det Q}=1\).  Finally, reindexing the finite
sum in \eqref{eq:joint-root-average} by the root-set bijection proves
\eqref{eq:smith-average}.
\end{proof}

For the child in \eqref{eq:pole-exchange-child-iterated},
\[
 \mathcal C_j
 :={}
 \Phi_{u_1,s_1}^{\nv_1}\cdots
 \Phi_{u_{h-1},s_{h-1}}^{\nv_{h-1}}
 \Phi_{u^e,g}^{\nv_h}
 \Phi_{
  \widehat u_{h+j}
  \V_{h-1}^{\mathbf{\eta}_j}\nv_h^{\tau_j},
  p_j
 }^{\nv_{h+1}}
 H_j(\V).
\]
With columns ordered as the carried equations and rows ordered as \(\V\),
the raw exponent matrix is
\begin{equation}\label{eq:raw-child-matrix}
\begin{aligned}
 B_j^{\rm raw}
 &:=
 \begin{pmatrix}
  S_{h-1}&0&\mathbf{\eta}_j\\
  0&g&\tau_j\\
  0&0&p_j
 \end{pmatrix}, \quad
 S_{h-1}:=\diag(s_1,\ldots,s_{h-1}).
\end{aligned}
\end{equation}
The corresponding marker-only carried coefficient tuple is
\begin{equation}\label{eq:raw-child-carried-coefficients}
 U_j^{\rm raw}
 :=
 (u_1,\ldots,u_{h-1},u^e,\widehat u_{h+j}).
\end{equation}
In particular,
\begin{equation}\label{eq:raw-child-determinant}
 \abs{\det B_j^{\rm raw}}
 =
 \left(\prod_{i=1}^{h-1}s_i\right)g p_j
 =\frac{D}{s_h}g[\kappa\gamma_j]_b
 =\frac{D}{s_h}[k\gamma_j]_{s_h}.
\end{equation}
Here \(g p_j=[k\gamma_j]_s\) follows from \(s=gb\) and
\(\kappa\equiv k/g\pmod b\).

\begin{proposition}
\label{prop:child-joint-realization}
Assume that \(H_j\) is defined on
\(\mathcal R_{B_j^{\rm raw}}(U_j^{\rm raw})\).  Then the iterated average
\(\mathcal C_j\) coincides with the joint root average
\[
 \mathcal C_j
 =
 \Psi_{U_j^{\rm raw},B_j^{\rm raw}}^{\V}
 H_j(\V).
\]

Moreover, choose \(P_j,Q_j\in\operatorname{GL}_{h+1}(\ZZ)\) so that
\[
 S_j:=P_jB_j^{\rm raw}Q_j
\]
is in Smith normal form.  Define the transformed carried coefficients
\(U_j=(u_{j,1},\ldots,u_{j,h+1})\) by
\begin{equation}\label{eq:child-transformed-U}
 u_{j,k}
 :=
 \bigl(U_j^{\rm raw}\bigr)^{Q_j\mathbf e_k}
 \qquad(1\leq k\leq h+1).
\end{equation}
For a fresh tuple \(\Lambda=(\lambda_1,\ldots,\lambda_{h+1})\),
Theorem~\ref{thm:smith-normalization} gives
\begin{equation}\label{eq:child-smith-average}
 \mathcal C_j
 =
 \Psi_{U_j^{\rm raw},B_j^{\rm raw}}^{\V}H_j(\V)
 =
 \Psi_{U_j,S_j}^{\Lambda}
 H_j\bigl(\V_{P_j}(\Lambda)\bigr).
\end{equation}

\end{proposition}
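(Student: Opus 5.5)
The first equality is the substantive part; the second is a direct application of Theorem~\ref{thm:smith-normalization}. My plan is to unfold the iterated average $\mathcal C_j$ into a sum over nested root sets, and then check that this nested set is exactly $\mathcal R_{B_j^{\rm raw}}(U_j^{\rm raw})$ with matching normalizing constants.

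Write the iterated average from the outside in. By Definition~\ref{def:phi}, $\mathcal C_j$ equals
\[
 \frac{1}{s_1\cdots s_{h-1}\,g\,p_j}\sum_{\omega_1}\cdots\sum_{\omega_{h-1}}\sum_{\omega_h}\sum_{\omega_{h+1}} H_j(\omega_1,\ldots,\omega_{h+1}).
\]
Here $\omega_i$ ranges over $\mathcal R_{s_i}(u_i)$ for $i<h$, and $\omega_h$ over $\mathcal R_g(u^e)$. For each fixed outer tuple, $\omega_{h+1}$ ranges over the $p_j$ roots of
\[
 \widehat u_{h+j}\,\boldsymbol\omega_{h-1}^{\boldsymbol\eta_j}\,\omega_h^{\tau_j}\,\omega_{h+1}^{p_j}=1 .
\]
Each inner coefficient is a nonzero element of $\overline{\K}$ once the outer variables are specialized, so each inner equation has exactly $p_j$ distinct nonzero roots. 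The operators are applied right to left, and substituting outer roots into the inner operator is legitimate. This is because the inner average of a rational function in $\nv_{h+1}$ whose coefficients are rational in the outer variables is a symmetric function of the inner roots, and specialization commutes with it wherever $H_j$ is defined.

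Next I compare this nested set with the joint root set. The set of tuples $(\omega_1,\ldots,\omega_{h+1})$ so enumerated is precisely the set of solutions of the $h+1$ equations $u_i\boldsymbol\omega^{\boldsymbol\beta_i}=1$, where the $\boldsymbol\beta_i$ are the columns of $B_j^{\rm raw}$. The matrix is upper triangular, so column $i$ involves only $\omega_1,\ldots,\omega_i$, and the last column is exactly the inner equation above. Hence the nested enumeration is a bijective listing of $\mathcal R_{B_j^{\rm raw}}(U_j^{\rm raw})$, with no tuple repeated: the outer coordinates determine the fiber, and distinct fibers are disjoint. The prefactor equals $1/\lvert\det B_j^{\rm raw}\rvert$ by \eqref{eq:raw-child-determinant}. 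This count also agrees with $\lvert\mathcal R_B(U)\rvert=\lvert\det B\rvert$ from Theorem~\ref{thm:smith-normalization}, which serves as a consistency check. Together these give $\mathcal C_j=\Psi^{\V}_{U_j^{\rm raw},B_j^{\rm raw}}H_j(\V)$.

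Finally, the hypothesis that $H_j$ is defined on $\mathcal R_{B_j^{\rm raw}}(U_j^{\rm raw})$ is exactly the hypothesis of Theorem~\ref{thm:smith-normalization}, applied with $B=B_j^{\rm raw}$, $U=U_j^{\rm raw}$, $P=P_j$, $Q=Q_j$. That theorem yields \eqref{eq:child-smith-average} with the coefficients \eqref{eq:child-transformed-U}. The only step needing care, and the one I expect to be the main obstacle, is the justification that iterated averaging with outer-variable-dependent coefficients may be evaluated pointwise on roots. I would handle it by noting that all data are Laurent monomials and that each fiber equation has exactly $p_j$ simple roots, since the characteristic is zero. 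This makes the nested sum a literal finite sum over the joint root set.
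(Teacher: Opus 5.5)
Your proposal is correct and follows the paper's proof essentially step for step. The paper likewise uses upper triangularity to write $\mathcal R_{B_j^{\rm raw}}(U_j^{\rm raw})$ as a disjoint union of $p_j$-element fibres over $\mathcal R_{\rm out}=\prod_{i<h}\mathcal R_{s_i}(u_i)\times\mathcal R_g(u^e)$, matches the normalizing constant via \eqref{eq:raw-child-determinant}, and then invokes Theorem~\ref{thm:smith-normalization}. Your extra justification that the nested averages with outer-variable-dependent coefficients may be evaluated pointwise on the fibres is a detail the paper leaves implicit in ``expanding the iterated averages from right to left,'' and your symmetric-function specialization argument covers it adequately.
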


\begin{proof}
Let
\[
 \mathcal R_{\rm out}
 :=
 \left(\prod_{i=1}^{h-1}\mathcal R_{s_i}(u_i)\right)
 \times\mathcal R_g(u^e).
\]
For
\(\boldsymbol\omega_{\rm out}=(\omega_1,\ldots,\omega_h)
\in\mathcal R_{\rm out}\), let
\[
 \mathcal F_j(\boldsymbol\omega_{\rm out})
 :=
 \left\{
  \omega_{h+1}\in\overline\K^{\,*}:
  \widehat u_{h+j}
  \prod_{i=1}^{h-1}
  \omega_i^{(\mathbf{\eta}_j)_i}
  \omega_h^{\tau_j}\omega_{h+1}^{p_j}=1
 \right\}.
\]
The defining equation has exactly \(p_j\) distinct nonzero roots.  Since
\(B_j^{\rm raw}\) is upper triangular, its common-root set is the disjoint
union of these fibres over \(\boldsymbol\omega_{\rm out}\in\mathcal R_{\rm out}\).

The hypothesis ensures that every summand below is defined.  Expanding the
iterated averages from right to left and using
\eqref{eq:raw-child-determinant}, we obtain
\[
\begin{aligned}
 \mathcal C_j
 &=
 \frac{1}{\left(\prod_{i=1}^{h-1}s_i\right)g p_j}
 \sum_{\boldsymbol\omega_{\rm out}\in\mathcal R_{\rm out}}
 \sum_{\omega_{h+1}\in
       \mathcal F_j(\boldsymbol\omega_{\rm out})}
 H_j(\boldsymbol\omega_{\rm out},\omega_{h+1})\\
 &=
 \frac{1}{\abs{\det B_j^{\rm raw}}}
 \sum_{\boldsymbol\omega\in
       \mathcal R_{B_j^{\rm raw}}(U_j^{\rm raw})}
 H_j(\boldsymbol\omega)\\
 &=
 \Psi_{U_j^{\rm raw},B_j^{\rm raw}}^{\V}H_j(\V).
\end{aligned}
\]
The Smith-normalized identity follows from
Theorem~\ref{thm:smith-normalization}.
\end{proof}

\paragraph{\textbf{Return to standard form.}}
The form
\[
\Psi_{U_j,S_j}^{\Lambda} H_j\bigl(\V_{P_j}(\Lambda)\bigr)
\]
from Proposition~\ref{prop:child-joint-realization} may not be a standard-form term and may contain unit entries. We reduce it using Proposition~\ref{prop:focused-reduction}. If no binding variable remains, the result is a marker-only output term; otherwise, it is a nonterminal standard-form term~\eqref{eq:normal-term}.

\begin{remark}
The nonsingular raw exponent matrices arising in the present recursion are
upper triangular.  In the corresponding nested root averages, each inner
average is taken fibrewise over the outer root set and therefore leaves that
root set unchanged, although it cannot in general be interchanged with the
outer averages.  When \(B\) is diagonal, the root equations are independent,
so the corresponding one-variable \(\Phi\)-operators commute and may be
applied in any order.

An analogous extension of the joint \(\Psi\)-formalism is possible when
\(B\) is singular;
see \cite[Definition~4 and Propositions~7 and~9]{XinXuZhang2025}.
Such an extension lies beyond the needs of the present paper.
\end{remark}

\section{Algorithm}
\label{sec:jointct-algorithm}

The preceding sections provide all local identities needed for the
recursion.  We now assemble them into a complete algorithm for a general
full-row-rank system without introducing a second set of recursive-state
notation.

\subsection{Primitive row-lattice reduction}

\begin{proposition}[Primitive row-lattice reduction]
\label{prop:primitive-reduction}
Let \(A\in\ZZ^{m\times n}\) have rank \(m\), and let
\(\mathbf b\in\ZZ^m\).  In deterministic polynomial time, one can either
certify that \(A\mathbf x=\mathbf b\) has no integral solution or replace
\((A,\mathbf b)\) by an integral pair
\((\widetilde A,\widetilde{\mathbf b})\) such that
\[
 \mathcal P(A,\mathbf b)
 =\mathcal P(\widetilde A,\widetilde{\mathbf b}),
 \qquad
 \sigma_{ \mathcal P( A,\mathbf b)}(\y)
 =\sigma_{\widetilde A,\widetilde{\mathbf b}}(\y),
\]
and
\begin{equation}\label{eq:primitive-standing-assumption}
 \widetilde A\ZZ^n=\ZZ^m
 \quad\Longleftrightarrow\quad
 \delta_m(\widetilde A)=1,
\end{equation}
where \(\delta_m\) is the maximal determinantal divisor.
\end{proposition}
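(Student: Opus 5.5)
We will use the Hermite or Smith normal form of \(A\). Since \(A\) has rank \(m\), compute \(P\in\operatorname{GL}_m(\ZZ)\) and \(Q\in\operatorname{GL}_n(\ZZ)\) with \(PAQ=(S\mid 0)\), where \(S=\diag(s_1,\ldots,s_m)\) and \(s_1\cdots s_m=\delta_m(A)\). For fixed-size integer inputs this is polynomial time by Kannan--Bachem \cite{KannanBachem1979}. The lattice \(A\ZZ^n\) equals \(P^{-1}S\ZZ^m\), so it has index \(\delta_m(A)\) in \(\ZZ^m\).

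\emph{Solvability test.} Put \(\mathbf b'=P\mathbf b\).
\begin{itemize}
\item The equation \(A\mathbf x=\mathbf b\) has an integral solution if and only if \(s_i\mid b'_i\) for every \(i\).
\item If some divisibility fails, there is no integral solution, and this certifies that \(\mathcal P(A,\mathbf b)\cap\ZZ^n=\emptyset\).
\end{itemize}

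\emph{Construction when solvable.} Set \(M:=P^{-1}S\in\ZZ^{m\times m}\), so that \(A\ZZ^n=M\ZZ^m\). Define
\[
\widetilde A:=M^{-1}A=S^{-1}PA=(I_m\mid 0)\,Q^{-1}
\quad\text{and}\quad
\widetilde{\mathbf b}:=M^{-1}\mathbf b=S^{-1}P\mathbf b.
\]
Both are integral: \(\widetilde A\) is a row block of the unimodular matrix \(Q^{-1}\), and \(\widetilde{\mathbf b}\) is integral by the divisibility test.

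\emph{Verification.} Since \(M\) is nonsingular, \(A\mathbf x=\mathbf b\) holds if and only if \(\widetilde A\mathbf x=\widetilde{\mathbf b}\), for all real \(\mathbf x\). Hence \(\mathcal P(A,\mathbf b)=\mathcal P(\widetilde A,\widetilde{\mathbf b})\) as sets. The generating functions agree because both are sums over the same set of integer points.

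Next, \(\widetilde A\ZZ^n=(I_m\mid0)Q^{-1}\ZZ^n=(I_m\mid 0)\ZZ^n=\ZZ^m\), using that \(Q^{-1}\) is unimodular.

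It remains to check the equivalence \eqref{eq:primitive-standing-assumption} for a general integral matrix of rank \(m\). The Smith form shows that \([\ZZ^m:\widetilde A\ZZ^n]=\delta_m(\widetilde A)\), the gcd of the \(m\times m\) minors. So \(\widetilde A\ZZ^n=\ZZ^m\) exactly when \(\delta_m(\widetilde A)=1\).

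\emph{Main obstacle.} The only delicate point is complexity. Polynomial time follows because the Smith form with transforms has polynomially bounded bit-size, and so do \(S^{-1}P\mathbf b\) and \(Q^{-1}\). An alternative is the Hermite normal form of \(A^{\mathsf T}\), which avoids computing \(Q^{-1}\) explicitly.
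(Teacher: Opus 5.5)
Your proposal is correct and follows essentially the same route as the paper: a transformation-producing Smith form \(PAQ=(S\ 0)\), the divisibility test \(s_i\mid(P\mathbf b)_i\), the replacement \(\widetilde A=S^{-1}PA=(I_m\ 0)Q^{-1}\) and \(\widetilde{\mathbf b}=S^{-1}P\mathbf b\), the Smith-form index characterization for the equivalence with \(\delta_m\), and Kannan--Bachem for polynomial bit complexity. Your explicit identification \(A\ZZ^n=P^{-1}S\ZZ^m\) and the remark about a Hermite-normal-form variant are harmless additions that do not change the argument.
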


\begin{proof}
This is a standard consequence of the Smith normal form; for a detailed
combinatorial treatment we refer to Proposition~3.2 of
\cite{XinZhangZhang2024}.  We include a self-contained proof here for
completeness and to keep the algorithm independent of external
combinatorial machinery.

Choose \(P\in\operatorname{GL}_m(\ZZ)\) and
\(Q\in\operatorname{GL}_n(\ZZ)\) such that
\[
 PAQ=(S\ \ 0),
 \qquad
 S=\diag(s_1,\ldots,s_m),
 \qquad
 1\leq s_1\mid\cdots\mid s_m.
\]
If some \(s_i\) does not divide \((P\mathbf b)_i\), then the equation has
no integral solution.  Otherwise set
\[
 \widetilde A:=S^{-1}PA,
 \qquad
 \widetilde{\mathbf b}:=S^{-1}P\mathbf b.
\]
Both are integral,
\(\widetilde A=(I_m\ \ 0)Q^{-1}\), and
\[
 A\mathbf x=\mathbf b
 \quad\Longleftrightarrow\quad
 \widetilde A\mathbf x=\widetilde{\mathbf b}.
\]

This proves the two identities and
\(\widetilde A\ZZ^n=\ZZ^m\); the equivalence with the determinantal
divisor is the standard Smith-form characterization of the column-lattice
index.  A
transformation-producing Smith form and all displayed arithmetic have
polynomial bit complexity and polynomial output size
\cite{KannanBachem1979}.
\end{proof}

Whenever the second case of Proposition~\ref{prop:primitive-reduction}
occurs, we replace the input by that pair and continue to denote it by
\(A,\mathbf b\).  Thus, throughout the algorithm,
\begin{equation}\label{eq:primitive-column-lattice}
 A\ZZ^n=\ZZ^m,
 \qquad
 \delta_m(A)=1.
\end{equation}

\begin{remark}[Initial generation condition]
\label{rem:initial-generation}
Write the two constant-term inputs of Section~\ref{sec:general-reduction}
in the raw-state notation of Section~\ref{sec:recursive-terms}.  For a
\texttt{SimpCone[S]} term, before centered reduction,
\[
 B=S=PA_JQ,
 \qquad
 C=(PA_j)_{j\in J^c}.
\]
Consequently,
\[
 [\,B\ C\,]\ZZ^n
 =P[\,A_JQ\ A_{J^c}\,]\ZZ^n
 =PA\ZZ^n
 =\ZZ^m,
\]
and hence
\[
 \delta_m([\,B\ C\,])=1.
\]

For the direct simplicial-cone expression
\eqref{eq:simplicial-cone-ct}, the carried block in the notation of
Section~\ref{sec:recursive-terms} is the displayed Smith matrix \(B=S\),
whereas the remaining exponent block is \(C=-P\).  Since \(P\) is
unimodular,
\[
 \delta_d([\,B\ C\,])=1.
\]

These two facts serve as the base cases for the induction argument that
establishes preservation of the generation condition in
Theorem~\ref{thm:generation-preserved}.
\end{remark}

\subsection{Short multipliers and the queue algorithm}

We first record the conditional arithmetic procedure used at a normalized
state.  Its hypothesis will be verified for every state in
Lemma~\ref{lem:h-le-r}.

\begin{lemma}\label{lem:minkowski}
Fix \(r\geq1\), \(s>1\), and
\(\gamma_1,\ldots,\gamma_r\in\ZZ\), and suppose that
\begin{equation}\label{eq:multiplier-primitivity}
 \gcd(s,\gamma_1,\ldots,\gamma_r)=1.
\end{equation}
There exist \(k\in\ZZ\) and a nonzero vector
\(\boldsymbol\rho=(\rho_1,\ldots,\rho_r)\in\ZZ^r\) such that
\begin{equation}\label{eq:minkowski-bound}
 1\leq k\leq\left\lfloor\frac{s}{2}\right\rfloor,
 \qquad
 \rho_j\equiv k\gamma_j\pmod s,
 \qquad
 \norm{\boldsymbol\rho}_\infty\leq s^{(r-1)/r}.
\end{equation}
Consequently,
\begin{equation}\label{eq:k-range}
 [k\gamma_j]_s\leq s^{(r-1)/r}
 \qquad(1\leq j\leq r).
\end{equation}
\end{lemma}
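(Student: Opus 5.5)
The plan is a Minkowski argument on a residue lattice. Consider
\[
 \Gamma:=\{(k,\boldsymbol\rho)\in\ZZ^{1+r}:\rho_j\equiv k\gamma_j\pmod s\ (1\le j\le r)\}
\]
and, more usefully, its projection to the last $r$ coordinates,
\[
 L:=\{\boldsymbol\rho\in\ZZ^r:\exists k\in\ZZ,\ \boldsymbol\rho\equiv k\boldsymbol\gamma\pmod s\}
 =\ZZ\boldsymbol\gamma+s\ZZ^r .
\]
The first step is to compute the index of $L$. The quotient $L/s\ZZ^r$ is the cyclic subgroup of $(\ZZ/s)^r$ generated by $\boldsymbol\gamma$. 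By \eqref{eq:multiplier-primitivity}, the order of $\boldsymbol\gamma$ modulo $s$ is exactly $s$: if $t\boldsymbol\gamma\equiv0$ with $t\mid s$, then $s/t$ divides every $\gamma_j$ and $s$, so $t=s$. Hence $[\ZZ^r:L]=s^r/s=s^{r-1}$, that is, $\det L=s^{r-1}$.

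The second step applies Minkowski's convex body theorem to the cube $\{\norm{\mathbf x}_\infty\le s^{(r-1)/r}\}$. Its volume is $2^r s^{r-1}=2^r\det L$, and since it is compact, the closed form of Minkowski's theorem gives a nonzero $\boldsymbol\rho\in L$ with $\norm{\boldsymbol\rho}_\infty\le s^{(r-1)/r}$. This $\boldsymbol\rho$ comes with some $k$ satisfying $\boldsymbol\rho\equiv k\boldsymbol\gamma\pmod s$, and $k$ is determined modulo $s$.

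The third step, normalizing $k$, is where the care is needed. First, $k\not\equiv0\pmod s$. Otherwise every $\rho_j\equiv0\pmod s$ while $|\rho_j|\le s^{(r-1)/r}<s$, which forces $\boldsymbol\rho=0$, a contradiction. So we may take $k$ with $0<|k|\le\lfloor s/2\rfloor$ by choosing the centered representative. The value $k=s/2$ is allowed when $s$ is even, and in that case it is already positive. If $k<0$, we replace $(k,\boldsymbol\rho)$ by $(-k,-\boldsymbol\rho)$; this preserves the congruences, the nonvanishing of $\boldsymbol\rho$, and the norm bound. This gives $1\le k\le\lfloor s/2\rfloor$. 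The edge case $r=1$ needs a separate check: there $s^{0}=1$, $\gcd(s,\gamma_1)=1$, and $L=\ZZ$, so we take $\rho_1=1$ and $k\equiv\gamma_1^{-1}\pmod s$, adjusting the signs as above.

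Finally, \eqref{eq:k-range} follows directly. Because $\rho_j\equiv k\gamma_j\pmod s$, the quantity $[k\gamma_j]_s$ is the minimal absolute value in that residue class, so $[k\gamma_j]_s\le|\rho_j|\le s^{(r-1)/r}$.

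The only real subtlety is the index computation $\det L=s^{r-1}$, which rests on primitivity. Without it, $\det L$ would shrink, but $k\equiv0$ would no longer be excluded. For the algorithmic claim, a Lenstra-type shortest-vector computation in fixed dimension on a basis of $L$ produces such a $\boldsymbol\rho$; the existence statement, however, needs only Minkowski's theorem.
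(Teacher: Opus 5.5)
Your proof is correct and is essentially the paper's argument. It uses the same residue lattice $\ZZ\boldsymbol\gamma+s\ZZ^r$ of determinant $s^{r-1}$ under primitivity, the same closed-cube Minkowski step with radius $s^{(r-1)/r}<s$ to rule out $k\equiv0\pmod s$, and the same sign flip to place $k$ in $[1,\lfloor s/2\rfloor]$; your separate $r=1$ check is harmless but unnecessary. One slip in your closing aside: without primitivity the index is $g_0s^{r-1}$ with $g_0=\gcd(s,\gamma_1,\ldots,\gamma_r)$, so $\det L$ grows rather than shrinks, which is exactly why the cube of radius $s^{(r-1)/r}$ would then be too small for Minkowski's theorem.
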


\begin{proof}
The existence of such a multiplier was proved in Lemma~2.5 of \cite{XinZhangZhang2024} using the pigeonhole principle, which yields a slightly weaker bound.
We reproduce the proof here with a slightly different lattice
viewpoint, which also makes the connection to Minkowski's theorem
transparent and prepares for the complexity discussion in
Remark~\ref{rem:LLLm}.

Let
\[
 \cL_{\boldsymbol\gamma,s}
 =\ZZ\boldsymbol\gamma+s\ZZ^r\subseteq\ZZ^r,
 \qquad
 \boldsymbol\gamma=(\gamma_1,\ldots,\gamma_r).
\]
Equivalently, this is the rank-\(r\) lattice generated by the rows of
\begin{equation}\label{eq:minkowski-lattice}
 \begin{pmatrix}
  \gamma_1&\gamma_2&\cdots&\gamma_r\\
  s&0&\cdots&0\\
  0&s&\cdots&0\\
  \vdots&\vdots&\ddots&\vdots\\
  0&0&\cdots&s
 \end{pmatrix}.
\end{equation}
Put \(g_0=\gcd(s,\gamma_1,\ldots,\gamma_r)\).  The class of
\(\boldsymbol\gamma\) in \((\ZZ/s\ZZ)^r\) has order \(s/g_0\).
Reduction modulo \(s\) therefore gives
\[
 [\ZZ^r:\cL_{\boldsymbol\gamma,s}]
 =\frac{s^r}{s/g_0}=g_0s^{r-1}.
\]
Under \eqref{eq:multiplier-primitivity}, it follows that
\[
 \det\cL_{\boldsymbol\gamma,s}=s^{r-1}.
\]

Set \(R_s=s^{(r-1)/r}\) and
\(\mathcal B_s=[-R_s,R_s]^r\).  Then
\[
 \operatorname{vol}(\mathcal B_s)
 =(2R_s)^r=2^rs^{r-1}=2^r\det\cL_{\boldsymbol\gamma,s}.
\]
So, Minkowski's convex-body theorem
\cite[Chapter~III, Theorem~I]{Cassels1997}
 state that
there is at
least one nonzero lattice point of the closed box
\(\mathcal B_s\),  i.e.,
\[
 0\ne\boldsymbol\rho\in\cL_{\boldsymbol\gamma,s},
 \qquad
 \norm{\boldsymbol\rho}_\infty\leq R_s.
\]

Write \(\boldsymbol\rho=k_0\boldsymbol\gamma+s\mathbf m\).  Since
\(R_s<s\), the coefficient \(k_0\) is not divisible by \(s\): otherwise
\(0\ne\boldsymbol\rho\in s\ZZ^r\) would imply
\(\|\boldsymbol\rho\|_\infty\geq s\).  Reduce \(k_0\)
to \(k\in\{1,\ldots,s-1\}\); if \(k>s/2\), replace
\((k,\boldsymbol\rho)\) by \((s-k,-\boldsymbol\rho)\).  This proves
\eqref{eq:minkowski-bound}, and \eqref{eq:k-range} follows from the
definition of \([k\gamma_j]_s\).
\end{proof}

\begin{remark} \label{rem:LLLm}
  For fixed \(r\), such a multiplier can be found in time polynomial in the
binary input length by Lenstra's fixed-dimensional integer-programming algorithm  \cite{Lenstra1983}.

However, we generally recommend using the LLL algorithm\cite{LenstraLenstraLovasz1982} to compute an LLL-reduced basis of the lattice
$\cL_{\boldsymbol\gamma,s}$, thereby obtaining the multiplier  $k$ satisfying the conditions.
\end{remark}

\begingroup
\small
\setlength{\medskipamount}{3pt}
\setlength{\abovedisplayskip}{3pt}
\setlength{\belowdisplayskip}{3pt}
\setlength{\abovedisplayshortskip}{2pt}
\setlength{\belowdisplayshortskip}{2pt}
{\color{black}
\begin{algorithm}
\label{alg:jointct-kernel}
\leavevmode

\noindent\textbf{\em Input.}
A simplical cone $K$

\medskip
\noindent\textbf{\em Output.}
A finite list \(\mathcal O_{JCT}\) of  Laurent rational
functions such that
\begin{equation}
\label{eq:jointct-output-identity}
  \sigma_{K} (\y) =  \sum_{i }  R_i(\y) =\sum_i \frac{\y^{\alpha_{i,0}}}{\prod_{\ell =1}^r (1- \y^{\alpha_{i,\ell}}) },
\end{equation}

\medskip
\begin{enumerate}[label=\textbf{Step \arabic*.},leftmargin=*,topsep=.4ex,partopsep=0pt,parsep=0pt,itemsep=.25ex]

\item Initialize
$\sigma_{K} (\y)$ with the constant-term formula as defined in
Proposition \ref{prop:simplicial-cone-ct}, and
then applying Proposition \ref{prop:focused-reduction}
(focused remainder reduction) to obtain the normalize form as \eqref{eq:normal-term}.
Let \(\mathcal O_{JCT}\) be the output set and \(\mathcal Q\) the non-terminal set.

\item For a non-terminal term,
\[
\Psi^\Lambda_{U,S} G(\Lambda):= \Psi^\Lambda_{U,S}
\left(
\frac{c\,u_0\Lambda^{c_0}}
{\prod_{\ell=1}^r
\bigl(1-u_{h+\ell}\Lambda^{c_\ell}\bigr)}
\right).
\]
If this expression is not normalized, we update it using Proposition~\ref{prop:focused-reduction} (focused remainder reduction).

\item
Focus on the last  coordinate $\lambda_h$ and set
$\gamma_\ell:=(c_\ell)_h$.
Use Lemma \ref{lem:minkowski} and Remark \ref{rem:LLLm}  to compute an multipler \(k\).
Set
$
   g:=\gcd(k,s_h),
$
$
   b:=\frac{s_h}{g}.
$
Compute the unit lift \(\kappa\) of Lemma~\ref{lem:unit-lift} and integers \(e,t\) such that
\[\qquad
   1\le \kappa,e < s_h,
   \quad
   \kappa\equiv\frac{k}{g}\pmod b,
   \quad
   \gcd(\kappa,s_h)=1,e\kappa=1+ts_h.
\]

\item
Apply the split formula (Lemma \ref{lem:split}) to :
$$
   \Phi^\lambda_{u,s_h}G(\Lambda)
   =
   \Phi^{v_h}_{u^e,g}
   \Phi^{v_{h+1}}_{v_h^{-1},b}
   G\bigl(\Lambda_{h-1} u^t v_{h+1}^{\kappa}\bigr).
$$
as defined in \eqref{eq:focused-split-recalled}.
Perform the focused remainder reduction of $\Phi^{v_{h+1}}_{v_h^{-1},b}$ and the pole exchange identity of Proposition \ref{prop:pole-exchange} gives
\[
\Phi_{u_1,s_1}^{\lambda_1}\cdots
 \Phi_{u_h,s_h}^{\lambda_h}G(\Lambda)  =  \sum_{p_j >0}  \mathcal C_j.
\]
where
$$
 \mathcal C_j
 :={}
 \Phi_{u_1,s_1}^{\nv_1}\cdots
 \Phi_{u_{h-1},s_{h-1}}^{\nv_{h-1}}
 \Phi_{u^e,g}^{\nv_h}
 \Phi_{
  \widehat u_{h+j}
  \V_{h-1}^{\mathbf{\eta}_j}\nv_h^{\tau_j},
  p_j
 }^{\nv_{h+1}}
 H_j(\V).
$$
Detailed steps have been described in Section 5.1. % 无法识别标号

\item
Normalize each nonzero child in $\mathcal C_j$ separately.  Relabel the raw
binding variables as
\[
   V=(v_1,\ldots,v_{h+1})
   :=
   (\lambda_1,\ldots,\lambda_{h-1},v_h,v_{h+1}),
\]
Theorem \ref{thm:smith-normalization} and Proposition \ref{prop:child-joint-realization} then gives
\[
   \mathcal C_j
   =
   \Psi^\Lambda_{U_j,S_j}
   H_j\bigl(V_{P_j}(\Lambda)\bigr).
\]
Perform the focused remainder reduction of $\Psi^\Lambda_{U_j,S_j}$
If $\Lambda$ is empty, append the resulting
Laurent rational function to
\(\mathcal O_{\mathrm{JCT}}\).  Otherwise, append the resulting normalized
state to \(\mathcal Q\).

\item
If \(\mathcal Q\neq\varnothing\), return to \textbf{Step 2}.  If
\(\mathcal Q=\varnothing\), return the complete list
\(\mathcal O_{\mathrm{JCT}}\).
\end{enumerate}
\end{algorithm}

}
\endgroup
\clearpage

\section{Structural invariants and well-definedness}
\label{sec:jointct-invariants}

Two structural properties are used throughout the algorithm.  The
generation condition is an integral statement about the auxiliary exponent
lattice; it bounds the number of carried equations and supplies focused
primitivity.  Full-column independence also records marker exponents; it
guarantees regularity and the coprimality needed for pole exchange.

For a normalized state or a leaf \(T\),
$$\Psi^\Lambda_{U,S}
   \left(
      \frac{c\,u_0\Lambda^{c_0}}
           {
            \prod_{\ell=1}^r
            \bigl(1-u_{h+\ell}\Lambda^{c_\ell}\bigr)}
   \right),
$$
where
\[
S=\diag(s_1,\ldots,s_h), \qquad
 C:=(\mathbf c_1,\ldots,\mathbf c_r),
\]
for its carried exponent matrix $S$ and its remaining binding-exponent matrix $C$,
respectively.

\begin{theorem}
\label{thm:generation-preserved}
Every initial normalized state obtained from
Proposition~\ref{prop:simpcone-smith} or
Proposition~\ref{prop:simplicial-cone-ct}, every raw child, and every
normalized descendant satisfies
\begin{equation}\label{eq:generation}
 S\ZZ^h+C\ZZ^r=\ZZ^h.
\end{equation}
The condition is preserved by  focused remainder reduction (Prop. \ref{prop:focused-reduction}),   pole exchange (Prop. \ref{prop:pole-exchange}),
the  split formula (Lemma \ref{lem:split})),
 and Smith
normalizatizon (Theo. \ref{thm:smith-normalization}).
\end{theorem}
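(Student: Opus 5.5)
The proof is an induction along the recursion tree. The base case is Remark~\ref{rem:initial-generation}: for both input routes the raw pair \([\,B\ C\,]\) generates \(\ZZ^h\). The first reduction to \(s_i>1\) (substituting \(\lambda_i=u_i^{-1}\) when \(s_i=1\)) deletes a row \(i\) together with the column \(s_i\mathbf e_i=\mathbf e_i\). Projecting away a coordinate whose unit vector lies in the lattice preserves generation, so the deleted configuration still generates \(\ZZ^{h-1}\). It then suffices to check that each of the four operations preserves \eqref{eq:generation}. I would phrase everything as a statement about the exponent lattice \(L:=B\ZZ^q+C\ZZ^r\), where \(B\) is the carried matrix (diagonal or raw upper triangular) and \(C\) collects the binding exponents of the denominator binomials. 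The numerator exponent \(\mathbf c_0\) and the marker coefficients play no role.

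\textbf{Focused remainder reduction.} Proposition~\ref{prop:focused-reduction} replaces each \(\mathbf c_\ell\) by \(\pm\mathbf c_\ell+\mathbf m\) with \(\mathbf m\in S\ZZ^h\); the sign comes from the inversion \eqref{eq:local-binomial-inversion}. Such a change fixes \(S\ZZ^h+C\ZZ^r\). Factors reduced to exponent \(0\) become marker-only and stay in the cofactor with column \(0\), which is harmless.

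\textbf{Smith normalization.} For Theorem~\ref{thm:smith-normalization}, the substitution \(\V=\V_P(\Lambda)\) sends an exponent vector \(\mathbf c\) in \(\V\) to \(P^{\mathsf T}\mathbf c\) in \(\Lambda\). (I would fix the row/column convention so that the carried block becomes \(PBQ=S\) and the cofactor block becomes \(PC\).) Right multiplication by \(Q\) does not change \(B\ZZ^q\), so the new lattice is \(P L\). This equals \(\ZZ^q\) exactly when \(L=\ZZ^q\).

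\textbf{Split plus pole exchange.} This is the main obstacle, because the dimension grows from \(h\) to \(h+1\). The split \(\lambda_h=u^t\nv_{h+1}^\kappa\) together with \(\nv_{h+1}^b=\nv_h\) gives the intermediate carried matrix with columns \(s_i\mathbf e_i\) for \(i<h\), \(g\mathbf e_h\), and \(b\mathbf e_{h+1}-\mathbf e_h\). The cofactor columns are \((\mathbf c'_\ell,0,\kappa\gamma_\ell)\).

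First show that this \((h+1)\)-dimensional configuration generates \(\ZZ^{h+1}\). Because \(\mathbf e_{h+1}\) and \(\mathbf e_h\) are related by the column \(b\mathbf e_{h+1}-\mathbf e_h\), the quotient by that column is identified with \(\ZZ^h\) via \(\mathbf e_h\mapsto b\), \(\mathbf e_{h+1}\mapsto1\). Under this map the columns become \(s_i\mathbf e_i\), \(gb=s\) in the last coordinate, and \((\mathbf c'_\ell,\kappa\gamma_\ell)\). The claim therefore reduces to the statement that the last coordinate can be multiplied by \(\kappa\) without changing the lattice. This holds because \(\gcd(\kappa,s)=1\): the last coordinate is only ever read modulo \(s\), since \(s\mathbf e_h\) lies in the lattice, and \(\kappa\) is invertible modulo \(s\).

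Next, the focused reduction in \(\nv_{h+1}\) modulo \(\nv_h^{-1}\nv_{h+1}^b\) is covered by the first case above. For pole exchange in the child \(j\), the carried column \(b\mathbf e_{h+1}-\mathbf e_h\) is swapped with the cofactor column \(\mathbf w_j=(\boldsymbol\eta_j,\tau_j,p_j)\) of \(1-w_{h+j}\nv_{h+1}^{p_j}\). The old carried column re-enters the cofactor through the factor \(1-\nv_h^{-1}\nv_{h+1}^b\) in \(H_j\), see \eqref{eq:pole-exchange-cofactor}. The union of all columns is unchanged, so \(L\) is unchanged. The raw child \((B_j^{\rm raw},C_j)\) therefore generates \(\ZZ^{h+1}\), and Smith normalization followed by reduction gives the normalized descendant.

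I expect the delicate bookkeeping to be confirming that the generation condition refers to the full column set, carried plus cofactor, rather than to \(C\) alone, so that the swap is literally an invariance of \(L\). I also need to check that after Smith normalization the trivial \(s_i=1\) rows are eliminated exactly as in the initial step.
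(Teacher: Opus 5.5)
Your proposal is correct and follows the paper's proof: the same base case (Remark~\ref{rem:initial-generation}), the same per-operation invariance checks (remainder reduction and inversion as column shears and sign changes, pole exchange as a swap of one column between the carried and cofactor blocks, Smith normalization as a unimodular change, unit-entry deletion), and the same reduction of the split matrix to the matrix with last row \((0,\,s,\,\kappa\boldsymbol\gamma)\), concluded with \(\gcd(\kappa,s)=1\). The only difference is cosmetic: you finish via the quotient by \(b\mathbf e_{h+1}-\mathbf e_h\) and the invertibility of \(\kappa\) modulo \(s\) (using that \(s\mathbf e_h\) lies in the lattice), whereas the paper uses determinantal divisors and the fact that each \(h\)-minor of the new matrix equals \(T\) or \(\kappa T\) for the corresponding parent minor \(T\).
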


\begin{theorem}
\label{thm:generation-preserved}
Every initial normalized state obtained from
Proposition~\ref{prop:simpcone-smith} or
Proposition~\ref{prop:simplicial-cone-ct}, every raw child, and every
normalized descendant satisfies
\begin{equation}\label{eq:generation}
 S\ZZ^h+C\ZZ^r=\ZZ^h.
\end{equation}
The condition is preserved by  focused remainder reduction (Prop. \ref{prop:focused-reduction}),   pole exchange (Prop. \ref{prop:pole-exchange}),
the  split formula (Lemma \ref{lem:split})),
 and Smith
normalizatizon (Theo. \ref{thm:smith-normalization}).
\end{theorem}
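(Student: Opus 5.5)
We argue by induction along the recursion. The base cases are exactly Remark~\ref{rem:initial-generation}: both constant-term inputs satisfy \(\delta([\,B\ C\,])=1\), that is, \([\,S\ C\,]\ZZ^{h+r}=\ZZ^h\). It then suffices to check that each elementary operation carries a state satisfying \eqref{eq:generation} to one satisfying it. We use the following reformulation throughout. Let \(M\subseteq\ZZ^h\) be the lattice spanned by the columns of \(S\) and of \(C\), where the carried columns may be arbitrary (raw, upper triangular) and need not be diagonal. Then \eqref{eq:generation} says \(M=\ZZ^h\). Numerators and marker coefficients \(u_\ell\), \(w_\ell\), \(\widehat u_\ell\) play no role, so only the exponent columns need to be tracked.

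\emph{Cheap cases.}

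\textbf{Focused remainder reduction (Proposition~\ref{prop:focused-reduction}).} Modulo the carried equation in \(\lambda_i\), an exponent column \(\mathbf c_\ell\) is replaced by \(\mathbf c_\ell+m\,\boldsymbol\beta_i\), possibly followed by a sign change through \eqref{eq:local-binomial-inversion}. Any \(\lambda\)-dependent part absorbed into a coefficient corresponds to subtracting carried columns. None of this changes \(M\). Factors with \(b_j=0\) remain in \(C\) as zero columns, which is harmless.

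\textbf{Pole exchange (Proposition~\ref{prop:pole-exchange}).} The selected column \(p_j\mathbf e_{h+1}+(\text{outer part})\) moves from \(C\) into the carried block. The old carried column \((0,-1,b)^{\mathsf T}\) of \(1-\nv_h^{-1}\nv_{h+1}^b\) moves into the denominator of \(H_j\), as in \eqref{eq:pole-exchange-cofactor}. The union of all columns is unchanged, so \(M\) is unchanged.

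\textbf{Smith normalization (Theorem~\ref{thm:smith-normalization}).} The substitution \(\V=\V_P(\Lambda)\) sends an exponent vector \(\mathbf c\) to \(P\mathbf c\), and the carried block becomes \(PBQ\). Hence \(M\) is replaced by \(PM\), and \(PM=\ZZ^h\) because \(P\) is unimodular.

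\textbf{Elimination of a unit entry \(s_i=1\).} Setting \(\lambda_i=\widetilde u_i^{-1}\) deletes row \(i\) and the carried column \(\mathbf e_i\). The new lattice is the image of \(M\) under the coordinate projection forgetting coordinate \(i\), and a projection of \(\ZZ^h\) is onto.

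\emph{Main step: the split (Lemma~\ref{lem:split}).} This is the only place where the ambient rank grows, from \(h\) to \(h+1\), so it is where the real work lies. Order the coordinates as \((\V_{h-1},\nv_h,\nv_{h+1})\). The raw columns are:
\begin{itemize}
\item the outer carried columns \(s_i\mathbf e_i\) for \(i<h\);
\item the column \(g\mathbf e_h\) from \(u^e\nv_h^g=1\);
\item the column \(b\mathbf e_{h+1}-\mathbf e_h\) from \(\nv_h^{-1}\nv_{h+1}^b=1\);
\item the transformed remaining columns \(\mathbf c'_\ell+\kappa\gamma_\ell\mathbf e_{h+1}\); the factor \(u^t\) goes into the markers.
\end{itemize}
Let \(M'\) be their span. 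We show \(M'=\ZZ^{h+1}\) as follows.
\begin{enumerate}
\item \(gb\,\mathbf e_{h+1}=g(b\mathbf e_{h+1}-\mathbf e_h)+g\mathbf e_h\in M'\), so \(s\,\mathbf e_{h+1}\in M'\).
\item By the induction hypothesis, for each \(i\le h\) there are integers with \(\sum_{i'} x_{i'}s_{i'}\mathbf e_{i'}+\sum_\ell y_\ell\mathbf c_\ell=\mathbf e_i\). Applying the same coefficients to the new columns, with \(s_h\mathbf e_h\) replaced by \(\kappa s\,\mathbf e_{h+1}\in M'\), yields:
 \begin{itemize}
 \item for \(i<h\): \(\mathbf e_i\in M'\);
 \item for \(i=h\): \(\kappa\,\mathbf e_{h+1}\in M'\).
 \end{itemize}
\item Since \(\gcd(\kappa,s)=1\) and \(s\,\mathbf e_{h+1}\in M'\), we get \(\mathbf e_{h+1}\in M'\).
\item Then \(\mathbf e_h=b\mathbf e_{h+1}-(b\mathbf e_{h+1}-\mathbf e_h)\in M'\).
\end{enumerate}
The subsequent focused reduction in \(\nv_{h+1}\) modulo \(b\), which produces \(p_\ell=[\kappa\gamma_\ell]_b\), only adds multiples of \(b\mathbf e_{h+1}-\mathbf e_h\) to these columns. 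It therefore preserves \(M'\) by the first cheap case.

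Chaining these facts gives the statement for every raw child \(B_j^{\rm raw}\) and for every normalized descendant. The step I expect to need the most care is confirming that each exponent change is a column operation within the carried span. Specifically, the coefficients \(w_{h+\ell}\) in \eqref{eq:focused-output-data} absorb \(\Lambda_{h-1}\)- and \(\nv_h\)-exponents, and the integral bookkeeping in the split has to keep track of this. The lattice computation above is designed so that this bookkeeping is never needed beyond ``add a carried column.''
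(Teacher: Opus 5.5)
Your proof is correct. Its skeleton matches the paper's: the base case comes from Remark~\ref{rem:initial-generation}, remainder reduction is a column shear, binomial inversion is a sign change, pole exchange is a column permutation, and Smith normalization is a unimodular transformation. The real difference lies in the language and, above all, in the split step.

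\textbf{The paper's route.} The paper works with the determinantal-divisor criterion $\delta_h([\,S\ C\,])=1$ throughout. It handles unit-entry deletion by a block elimination. For the split, it reduces $M_{\rm sp}$, by row and column operations on the two new rows and pole columns, to $M_\kappa$ plus an isolated unit pivot. It then argues by contradiction with a prime $p$ dividing every $h$-minor of $M_\kappa$. Here $p\mid s$ forces $p\nmid\kappa$, and each minor of $M_\kappa$ is either a minor of $M$ or $\kappa$ times one.

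\textbf{Your route.} You stay with the column lattice itself. Once $s\mathbf e_{h+1}\in M'$, the map $(\mathbf x',x_h)\mapsto(\mathbf x',0,\kappa x_h)$ carries the parent's generators into $M'$. This produces $\mathbf e_1,\dots,\mathbf e_{h-1}$ and $\kappa\mathbf e_{h+1}$, and B\'ezout with $\gcd(\kappa,s)=1$ finishes.

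\textbf{What each buys.} Your argument is constructive: it exhibits explicit integer combinations. It avoids both the matrix reduction and the case analysis on minors. It also handles the nondiagonal raw carried block, and the unit-entry deletion as a coordinate projection, without any determinantal bookkeeping. The paper's formulation keeps the invariant in the form $\delta_h=1$, which is how Lemma~\ref{lem:h-le-r} uses it: every $h\times h$ minor containing a column of $S$ is divisible by $s_1$. Your lattice form yields the same bound just as quickly by reducing modulo a prime dividing $s_1$.

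One small imprecision: a factor with $b_j=0$ leaves a column whose focused coordinate is zero, not a zero column. This does not affect the argument, since that column simply stays in $C$.
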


\begin{proof}
For an integer matrix \(N\in\ZZ^{q\times p}\) of full row rank, it is known that
\begin{equation}\label{eq:generation-determinantal}
 N\ZZ^p=\ZZ^q
 \quad\Longleftrightarrow\quad
 \delta_q(N)=1.
\end{equation}
Applied to \(N=[\,S\ C\,]\), criterion
\eqref{eq:generation-determinantal} is exactly
\(\delta_h([\,S\ C\,])=1\), which is equivalent to
\eqref{eq:generation}.

The initial terms  are exactly Remark~\ref{rem:initial-generation}.

Suppose that a Smith entry equal to \(1\) has been moved to the
first position.  If the remaining matrices \(B\) and \(C\) have \(h\) rows,
write
\[
 N=
 \begin{pmatrix}
  1&0&\boldsymbol\rho^{\mathsf T}\\
  0&S&C
 \end{pmatrix},
 \qquad
 \boldsymbol\rho\in\ZZ^r.
\]
Using the first column to eliminate \(\boldsymbol\rho^{\mathsf T}\) by
unimodular
column operations:
\[
 N\sim
 \begin{pmatrix}
  1&0&0\\
  0&S&C
 \end{pmatrix}.
\]
Hence
\[
 \delta_{h+1}(N)=\delta_h([\,S\ C\,]).
\]
Thus deleting a Smith entry equal to \(1\) also preserves the generation
condition.

Focused remainder reduction consists of column elementary opearations, inversion of a Laurent
binomial changes the sign of one column, pole exchange permutes columns,
and Smith normalization consists of unimodular row and column operations.

Hence these operations preserve the criterion  \eqref{eq:generation-determinantal}.

Assume inductively that a normalized parent term satisfies the generation
condition.  Separate its focused coordinate and write
\[
 S=
 \begin{pmatrix}
  S'&0\\
  0&s
 \end{pmatrix},
 \qquad
 C=
 \begin{pmatrix}
  C'\\
  \boldsymbol\gamma
 \end{pmatrix},
 \qquad
 \boldsymbol\gamma=(\gamma_1,\ldots,\gamma_r).
\]
Thus, for
\[
 M:=[\,S\ C\,]
 =
 \begin{pmatrix}
  S'&0&C'\\
  0&s&\boldsymbol\gamma
 \end{pmatrix},
\]
the induction hypothesis is
\begin{equation}\label{eq:parent-determinantal-divisor}
 \delta_h(M)=1.
\end{equation}

We first treat the exact split.  Write \(s=bg\), and let
\(\kappa\) be the unit lift used in Lemma~\ref{lem:unit-lift}, so that
\[
 \gcd(\kappa,s)=1.
\]
With the rows ordered as
\[
 (\lambda_1,\ldots,\lambda_{h-1},\nv_h,\nv_{h+1}),
\]
and with the two new pole columns in the displayed order, the combined
pole-and-denominator matrix after the split is
\[
 M_{\rm sp}:=[\,S_{\rm sp}\ C_{\rm sp}\,]
 =
 \begin{pmatrix}
  S'&0&0&C'\\
  0&g&-1&0\\
  0&0&b&\kappa\boldsymbol\gamma
 \end{pmatrix}.
\]
Applying unimodular elementary operations to the last two rows and the two
new pole columns gives
\[
 M_{\rm sp}
 \sim
 \begin{pmatrix}
  S'&0&0&C'\\
  0&s&0&\kappa\boldsymbol\gamma\\
  0&0&1&0
 \end{pmatrix}.
\]
Therefore,
\begin{equation}\label{eq:split-determinantal-reduction}
 \delta_{h+1}(M_{\rm sp})=\delta_h(M_\kappa), \quad \text{where} \quad
 M_\kappa:=
 \begin{pmatrix}
  S'&0&C'\\
  0&s&\kappa\boldsymbol\gamma
 \end{pmatrix}.
\end{equation}
We prove by contradiction that \(\delta_h(M_\kappa)=1\).
Suppose, to the contrary, that there is a prime \(p\) dividing every
\(h\)-minor \(T_\kappa\) of \(M_\kappa\).
Let \(T\) denote the corresponding minor of \(M\).
Since \(\det S=s_1\cdots s_h\) is an \(h\)-minor and \(s_i\mid s_h=s\) for all \(i\),
we have \(p\mid s\).
It follows from \(\gcd(\kappa,s)=1\) that \(p\nmid \kappa\).

Now consider two cases according to whether
the displayed \(s\) belongs to \(T_\kappa\) or not.
If \(T_\kappa\) contains \(s\), then \(T=T_\kappa\) and hence \(p\mid T\).
If \(T_\kappa\) does not contain \(s\), then \(T_\kappa=\kappa T\).
Therefore \(p\mid \kappa T\), and since \(p\nmid \kappa\), we obtain \(p\mid T\).

Thus \(p\) divides every \(h\)-minor \(T\) of \(M\), contradicting
\(\delta_h(M)=1\).

By \eqref{eq:split-determinantal-reduction}, the exact split preserves the
generation condition. 
  The result follows by induction.
\end{proof}

The generation condition immediately bounds the number of carried
equations.

\begin{lemma}[Number of carried equations]\label{lem:h-le-r}
Let \(T\) be a nonterminal normalized state of the form
\eqref{eq:normal-term} occurring in Algorithm \ref{alg:jointct-kernel}.  Its
carried exponent matrix and remaining binding-exponent matrix are
\[
 S=\diag(s_1,\ldots,s_h)\in\ZZ^{h\times h},
 \qquad
 C=(\mathbf c_1,\ldots,\mathbf c_r)\in\ZZ^{h\times r},
\]
respectively, where \(1<s_1\mid\cdots\mid s_h\).  Thus \(h\) is the
number of carried equations and \(r\) is the number of remaining
denominator factors. Let \(d=\dim\mathcal K\) for the dimension
of input simplicial cone in Algorithm \ref{alg:jointct-kernel}.  Then
\begin{equation}\label{eq:h-le-r}
 1\leq h\leq r\leq d.
\end{equation}
Moreover, each row of \([\,S\ C\,]\) is primitive, meaning that the
greatest common divisor of its entries is \(1\).  In particular, if
\(s:=s_h\) and \(\gamma_\ell:=(\mathbf c_\ell)_h\), then
\begin{equation}\label{eq:focused-primitivity}
 \gcd(s,\gamma_1,\ldots,\gamma_r)=1,
\end{equation}
and hence \(\gamma_j\not\equiv0\pmod{s}\) for some \(1\leq j\leq r\).
\end{lemma}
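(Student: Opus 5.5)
We derive everything from the generation condition of Theorem~\ref{thm:generation-preserved}, \(S\ZZ^h+C\ZZ^r=\ZZ^h\), equivalently \(\delta_h([\,S\ C\,])=1\).

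\textbf{Row primitivity.} Fix a row \(i\) and let \(p\) be a prime dividing every entry of row \(i\) of \([\,S\ C\,]\). Every vector of the column lattice \(S\ZZ^h+C\ZZ^r\) then has \(i\)th coordinate divisible by \(p\), so \(\mathbf e_i\) is not in the lattice, contradicting generation. Row \(h\) of \([\,S\ C\,]\) is \((0,\ldots,0,s_h,\gamma_1,\ldots,\gamma_r)\), and its primitivity is exactly \eqref{eq:focused-primitivity}. Since \(s_h>1\), some \(\gamma_j\not\equiv0\pmod{s_h}\); otherwise any prime factor of \(s_h\) would divide the whole row.

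\textbf{The bound \(h\leq r\).} Reduce modulo a prime \(p\mid s_1\); such a prime exists because \(s_1>1\), and since \(s_1\mid s_i\) for all \(i\), we have \(S\equiv0\pmod p\). Surjectivity of \([\,S\ C\,]\) onto \(\ZZ^h\) implies surjectivity onto \(\mathbb F_p^h\). The image of \(S\) is zero there, so \(C\bmod p\) alone must span \(\mathbb F_p^h\). This forces \(r\geq h\). Finally, \(h\geq1\) holds because the state is nonterminal.

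\textbf{The bound \(r\leq d\).} This is the step needing care: we must show that no operation in Algorithm~\ref{alg:jointct-kernel} increases the number of denominator factors beyond the initial count. Initially, Proposition~\ref{prop:simplicial-cone-ct} gives \(r=d\) factors \(1-\Lambda^{-P\mathbf e_j}\). Setting \(s_i=1\) coordinates by substitution, or deleting unit Smith entries, changes only the exponents and not the number of factors. Focused remainder reduction and binomial inversion keep the count \(r\). In the split, the \(r\) factors of \(G\) become the \(r\) factors of \(\bar G\). Pole exchange then promotes one factor \(1-w_{h+j}\nv_{h+1}^{p_j}\) to a carried equation and demotes the old pole \(1-\nv_h^{-1}\nv_{h+1}^b\) into the cofactor \(H_j\), so the denominator count stays \(r\). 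Meanwhile \(h\) grows by one, and some carried equations may later disappear by Smith entries equal to \(1\); these are removed by evaluation, and that evaluation does not add denominators.

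One further subtlety is factors that become free of all binding variables, for example those with \(p_\ell=0\) throughout. I would either count them among the \(r\) as displayed factors, which are still bounded by \(d\), or note that they carry zero columns and do not affect generation. In either case \(r\leq d\) holds by induction along the recursion, which completes \eqref{eq:h-le-r}.
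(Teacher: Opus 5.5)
Your proof is correct and follows the paper's route. Everything is derived from the generation condition \(S\ZZ^h+C\ZZ^r=\ZZ^h\), and \(r\le d\) comes from the fact that no operation increases the number of displayed denominator factors, which you check in more detail than the paper does. The only cosmetic difference is that you prove \(h\le r\) by reducing modulo a prime \(p\mid s_1\) and requiring \(C \bmod p\) to span \(\mathbb F_p^h\); the paper instead notes that for \(h>r\) every \(h\times h\) minor of \([\,S\ C\,]\) contains a column of \(S\) and is therefore divisible by \(s_1>1\), which is the same argument.
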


\begin{proof}
Since a state with no carried equation is a leaf, every nonterminal state
has \(h\geq1\).  Initially \(r=d\), and no recursive operation increases
the number of remaining denominator factors, so \(r\leq d\).

By Theorem~\ref{thm:generation-preserved},
\[
 S\ZZ^h+C\ZZ^r=\ZZ^h.
\]
If \(h>r\), every \(h\times h\) minor of \([\,S\ C\,]\) contains a
column of \(S\).  Since \(S=\diag(s_1,\ldots,s_h)\) and
\(s_1\mid s_i\) for every \(i\), that column, and hence the minor, is
divisible by \(s_1>1\).  This contradicts
\eqref{eq:generation-determinantal}.  Therefore \(h\leq r\).

Since \(\delta_h([\,S\ C\,])=1\), each row of \([\,S\ C\,]\) is
primitive.  Its last row gives \eqref{eq:focused-primitivity}, and the
final assertion follows from \(s>1\).
\end{proof}

A raw child in
Step 3 of Alg. \ref{alg:jointct-kernel}
produced from an \(h\)-equation normalized parent has
\begin{equation}\label{eq:raw-row-bound}
 h_{\mathrm{raw}}=h+1\leq r+1
\end{equation}
carried equations before Smith normalization, and one step
 (Step 4 of Alg. \ref{alg:jointct-kernel})
produces at
most \(r\) such raw children.  The inequality \(h\leq r\) is therefore a property of
Smith normalized states.

The next invariant ensures both the regularity of every distributed
summand and the pairwise coprimality required for each pole exchange.

To state it, write
\[
 u_\ell=\y^{\boldsymbol\alpha_\ell}
 \qquad(1\leq\ell\leq h+r)
\]
in the exponent lattice of the current marker variables.  The carried and
remaining full exponent columns of a recursive term form the matrix below,
where \(B\) is its carried exponent matrix and
\(C=(\mathbf c_1,\ldots,\mathbf c_r)\):
\begin{equation}\label{eq:full-exponent-matrix}
 \widehat M
 =
 \begin{pmatrix}
  (\boldsymbol\alpha_1,\ldots,\boldsymbol\alpha_h)&
  (\boldsymbol\alpha_{h+1},\ldots,\boldsymbol\alpha_{h+r})\\
  B&C
 \end{pmatrix}.
\end{equation}

\begin{lemma}[Full-column independence, regularity, and coprimality]
\label{lem:regularity-coprimality}
For both initial normalized \texttt{SimpCone[S]} states and directly
supplied simplicial-cone terms, the columns of the full exponent matrix
\(\widehat M\) in \eqref{eq:full-exponent-matrix} are linearly independent
over \(\QQ\) and generate a saturated sublattice of the full exponent
lattice.  Both properties are preserved in every descendant.  Hence
each descendant satisfies: (i) its cofactor is defined at every common root
of its carried equations; and (ii) the Laurent binomials in each
one-variable pole exchange are pairwise coprime.
\end{lemma}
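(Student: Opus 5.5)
We argue by induction along the recursion, in parallel with the proof of Theorem~\ref{thm:generation-preserved}, but now tracking the enlarged matrix \(\widehat M\) of \eqref{eq:full-exponent-matrix}. The invariant is: the columns of \(\widehat M\) are \(\QQ\)-linearly independent and \(\widehat M\,\ZZ^{h+r}=\widehat M\,\QQ^{h+r}\cap\ZZ^{N}\), where \(N\) is the number of marker plus binding rows.

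\textbf{Base cases.} For a directly supplied cone, Proposition~\ref{prop:simplicial-cone-ct} gives the carried columns \(\binom{Q\mathbf e_i}{S\mathbf e_i}\) and the remaining columns \(\binom{0}{-P\mathbf e_j}\). Hence
\[
\widehat M=\begin{pmatrix}Q&0\\ S&-P\end{pmatrix},
\]
which is unimodular of size \(2d\), so it is independent and saturated. For a \texttt{SimpCone[S]} term the marker block is \((\mathbf q_i)_i\) together with \((\mathbf e_j)_{j\in J^c}\). Its columns are the images under \(Q\) of the pivot unit vectors and the non-pivot unit vectors, so the marker block alone is already a unimodular change of basis of a coordinate sublattice of \(\ZZ^n\). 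This gives independence and saturation. The preliminary evaluations \(\Phi_{u_i,1}\) delete a carried column with binding entry \(1\); we handle them like a unit Smith entry (see below).

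\textbf{Preservation.}
\begin{itemize}
\item Focused remainder reduction adds integer multiples of a carried column to the other columns, and Laurent-binomial inversion negates a column. Both are unimodular column operations, so they preserve independence and saturation.
\item Pole exchange only permutes columns.
\item Smith normalization acts by \(\operatorname{diag}(I,P)\) on rows and by \(Q\) on the carried columns. Both are unimodular, so both properties are preserved.
\item Deleting a unit Smith entry: the substitution \(\lambda=\widetilde u^{-1}\) removes a row and a column. After eliminating that row by column operations, the remaining matrix is the Schur complement of a \(1\times1\) unimodular pivot, so saturation and independence descend.
\item The split replaces the row \(\lambda_h\) by rows \(\nu_h,\nu_{h+1}\), and the carried column by two columns with marker parts \(e\boldsymbol\alpha_h\) and \(0\). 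The remaining columns get marker part \(\boldsymbol\alpha_{h+\ell}+t\gamma_\ell\boldsymbol\alpha_h\) and binding part \(\kappa\gamma_\ell\). Mimicking the row/column reduction in the proof of Theorem~\ref{thm:generation-preserved}, we use \(e\kappa-ts=1\) to show that a unimodular transformation carries this to \(\widehat M_{\rm parent}\oplus(1)\). This needs an explicit \(2\times2\) unimodular block on the rows \((\nu_h,\nu_{h+1})\) combined with column operations.
\end{itemize}

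\textbf{Consequences.}
\begin{enumerate}
\item[(i)] Suppose a remaining factor \(1-u_{h+\ell}\Lambda^{\mathbf c_\ell}\) vanished at a common root. Then the monomial \(\y^{\boldsymbol\alpha_{h+\ell}}\Lambda^{\mathbf c_\ell}\) would equal \(1\) on that root. Eliminating \(\Lambda\) through the carried equations, some nonzero integer multiple of \(\binom{\boldsymbol\alpha_{h+\ell}}{\mathbf c_\ell}\) would lie in the span of the carried columns plus vectors with zero marker part vanishing on the roots. Equivalently, \(\y\) would satisfy a nontrivial monomial relation. Since the \(y\)'s are algebraically independent, this contradicts full-column independence.
\item[(ii)] Two binomials \(1-a\lambda^{p}\) and \(1-a'\lambda^{p'}\) with \(p,p'>0\) share a root exactly when \(a^{p'}=a'^{p}\) (up to a root of unity) in the coefficient field. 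Saturation excludes the torsion case, and independence excludes the rest. The same argument applies to the selected binomial versus the others.
\end{enumerate}

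\textbf{Main obstacle.} The hardest step is the split. We must check that the new marker data of the transformed columns, and the coefficient \(\widehat u_{h+j}\) after reduction modulo \(\nu_h^{-1}\nu_{h+1}^b\), fit the unimodular equivalence, and that the chain \(\kappa\gamma_\ell\to p_\ell\) with its sign inversions remains a column operation.
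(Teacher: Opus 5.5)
Your base cases, your handling of remainder reduction, inversion, pole exchange, Smith normalization and deletion of a unit Smith entry, and your derivation of (i) from $\QQ$-independence all agree with the paper. The gap is the exact split. It is the only step where preservation is not a formality, and you leave it unproved. The mechanism you indicate also cannot work as stated. Row operations confined to $(\nu_h,\nu_{h+1})$, combined with arbitrary column operations, leave the column lattice of the marker rows unchanged. After the split that lattice is generated by $\boldsymbol\alpha_i$ $(i<h)$, $e\boldsymbol\alpha_h$ and $\boldsymbol\alpha_{h+\ell}+t\gamma_\ell\boldsymbol\alpha_h$. For $\widehat M_{\mathrm{parent}}\oplus(1)$ it is generated by $\boldsymbol\alpha_1,\ldots,\boldsymbol\alpha_{h+r}$. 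When these vectors are independent (for instance at the first split of a \texttt{SimpCone[S]} input), the first lattice has index $e$ in the second, and typically $e>1$. Mimicking Theorem~\ref{thm:generation-preserved} does not help either. There $M_\kappa$ is compared with $M$ prime by prime rather than by an equivalence, and marker rows play no role. Here it is exactly the marker data ($e\boldsymbol\alpha_h$ and the shear $t\gamma_\ell\boldsymbol\alpha_h$) that compensate for the factor $\kappa$ on $\boldsymbol\gamma$.

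The paper closes this step modulo an arbitrary prime. Take a relation after the split with coefficients $a$ and $a_I$ on the outer and inner new pole columns, $\mathbf z$ on the remaining columns, and put $G=\boldsymbol\gamma\cdot\mathbf z$. The $\nu_h$-row forces $a_I=ga$, and the $\nu_{h+1}$-row gives $as+\kappa G=0$. Then $x=ea+tG$ is a relation before the split, since $xs+G=e(as+\kappa G)$ and the marker rows match. Conversely, $a=\kappa x$ and $a_I=g\kappa x$ give a relation after the split. These maps are mutually inverse by $e\kappa=1+ts$. Hence full column rank modulo every prime, which is equivalent to independence plus saturation, is preserved.

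If you want your equivalence with $\widehat M_{\mathrm{parent}}\oplus(1)$, it does exist once you also shear the marker rows. On the marker coordinates apply $\mathbf m\mapsto\mathbf m+z_{h+1}\boldsymbol\alpha_h$, where $z_{h+1}$ is the $\nu_{h+1}$-coordinate. On $(\nu_h,\nu_{h+1})$ apply $A=\begin{pmatrix}be&e+s\\ a_{21}&a_{22}\end{pmatrix}\in\mathrm{SL}_2(\ZZ)$, which exists because $\gcd(be,e+s)=1$. Let $X$ and $Y_\ell$ be the parent's focused carried column and remaining columns, and $E$ the new unit column; the other carried columns are fixed. The two new pole columns go to $eX+a_{21}gE$ and $bX+(a_{22}b-a_{21})E$, a unimodular change of $\{X,E\}$. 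Each remaining column goes to $Y_\ell+(t+\kappa)\gamma_\ell X+a_{22}\kappa\gamma_\ell E$, because $(e+s)\kappa-1=s(t+\kappa)$.

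A minor point on (ii): a common root forces $a^{p'}=a'^{p}$ exactly, with no root-of-unity ambiguity. Since all coefficients are pure monomials, independence alone gives coprimality, and saturation is not needed there.
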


\begin{proof}
Two Laurent binomials considered here, each equal to \(1\) minus a Laurent
monomial, are coprime if and only if their nonzero full exponent columns
are linearly independent over \(\QQ\).

For an initial \texttt{SimpCone[S]} term, the marker block is a column
permutation of \(I_n\), and Smith postprocessing applies a unimodular
combination to its carried columns.  For a directly supplied simplicial
cone, the marker block in \eqref{eq:simplicial-cone-ct} is block triangular
with unimodular diagonal blocks \(Q\) and \(I_d\).  Thus the initial full
columns are independent and generate a saturated sublattice in both cases.

The recursive operations preserve this independence.  Smith
normalization uses unimodular row operations and unimodular combinations
of carried columns.  Remainder reduction is a column shear, inversion
negates a column, and pole exchange permutes columns.  Deleting a unit
Smith entry removes, after column shears, an isolated pivot row together
with its column.  These integral operations preserve both properties.

For the exact split, reduce the full exponent matrices modulo an arbitrary
prime.  If \(a\) is the coefficient of the outer new pole column,
\(\mathbf z\) collects the coefficients of the remaining denominator
columns, and \(G=\boldsymbol\gamma\mathbf z\), then the two new binding rows
force the inner pole coefficient to be \(ga\) and give
\[
 as+\kappa G=0.
\]
The coefficient
\[
 x=ea+tG
\]
then gives a relation before the split, since
\[
 xs+G=e(as+\kappa G)=0.
\]
Conversely, a relation before the split with focused coefficient \(x\)
gives one after the split by taking \(a=\kappa x\) and inner coefficient
\(g\kappa x\).  The identity \(e\kappa=1+ts\) shows that these
correspondences are inverse.  Thus the full columns have full rank modulo
every prime before the split if and only if they do so afterwards.  Since
an independent set of integer columns spans a saturated sublattice exactly
when it has full column rank modulo every prime, saturation and independence
hold in every descendant.

For regularity, suppose that a remaining denominator monomial were \(1\)
at a common root of the carried equations.  Raising this equality to
\(\abs{\det B}\) and eliminating the binding variables with the carried
equations gives an equality of marker monomials.  Since the markers are
algebraically independent, the corresponding remaining full exponent
column is then a rational linear combination of the carried columns,
contradicting full-column independence.  Thus every descendant cofactor is
defined on the common root set.

For coprimality, any two columns of \(\widehat M\) are independent, so the
criterion above shows that the Laurent binomials in each pole exchange are
pairwise coprime.
\end{proof}

Lemma~\ref{lem:regularity-coprimality} supplies the hypothesis that every
summand is defined separately; cancellation of poles between different
summands is not used.  When a complete denominator monomial is viewed as a
Laurent monomial in \(\nv_{h+1}\), its coefficient is independent of
\(\nv_{h+1}\); every \(\nv_{h+1}\)-power belongs in the displayed pole order.  The
lemma justifies distributing the outer average over a finite sum.  It does
not assert that the $\nv_h$- and $\nv_{h+1}$-averages commute.

\section{Complexity in fixed dimension}
\label{sec:complexity}

\begin{theorem}[Complexity of Algorithm \ref{alg:jointct-kernel}]
\label{thm:complexity}
Let \(\mathcal K\) be a rational simplicial cone of fixed dimension \(d\geq1\),
and let \(D_0\) be the determinant of the normalized state obtained in
Step~1 of Algorithm~\ref{alg:jointct-kernel}.  Then the algorithm has
recursion depth
\[
 O_d\bigl(1+\log\log(2+D_0)\bigr)
\]
and expresses \(\sigma_{\mathcal K}(\y)\) as a signed sum of at most
\[
 N_{\mathrm{out}}
 \leq
 (1+\log D_0)^{O_d(1)}
\]
generating functions of possibly shifted unimodular simplicial cones.
\end{theorem}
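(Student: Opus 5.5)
The plan is to track the determinant \(D=\abs{\det S}=s_1\cdots s_h\) of a normalized state along every branch, and to show that it drops geometrically in \(\log D\). First I would establish the per-step inequality \(D_{\mathrm{child}}\leq\min\{D^{1-1/d^2},\lfloor D/2\rfloor\}\) quoted in the introduction.

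\emph{Raw child determinant.} At a nonterminal state, Lemma~\ref{lem:h-le-r} gives \eqref{eq:focused-primitivity}, so Lemma~\ref{lem:minkowski} applies with \(s=s_h\). It produces a \(k\) with \(1\leq k\leq\lfloor s/2\rfloor\) and \([k\gamma_j]_s\leq s^{(r-1)/r}\). By \eqref{eq:raw-child-determinant}, every raw child satisfies
\[
 \abs{\det B_j^{\rm raw}}=\frac{D}{s}[k\gamma_j]_s\leq D\,s^{-1/r}.
\]
Also, \([q]_s\leq s/2\) always holds, so \(\abs{\det B_j^{\rm raw}}\leq D/2\), and hence \(\leq\lfloor D/2\rfloor\) by integrality.

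\emph{Normalization does not increase the determinant.} Smith normalization (Theorem~\ref{thm:smith-normalization}) preserves \(\abs{\det}\). Deleting unit Smith entries and applying focused remainder reduction leave the product of the nonunit \(s_i\) unchanged, so the normalized child has \(D_{\mathrm{child}}=\abs{\det B_j^{\rm raw}}\).

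\emph{Exponent bound.} The entry \(s_h\) is the largest in the divisor chain, so \(s_h\geq D^{1/h}\). Combined with \(h\leq r\leq d\) from \eqref{eq:h-le-r}, this gives \(D_{\mathrm{child}}\leq D^{1-1/(hr)}\leq D^{1-1/d^2}\).

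\emph{Depth.} Set \(L=\log D\). Along any branch, each step gives \(L\mapsto L(1-1/d^2)\). After \(O(d^2\log\log(2+D_0))\) steps, \(D\) falls below a threshold \(c_d\), for instance \(2^{d^2}\). From there the bound \(D\mapsto\lfloor D/2\rfloor\) finishes the branch in \(O_d(1)\) further steps. The branch ends at \(D=1\): every \(s_i>1\) in a normalized state, so \(D=1\) forces \(h=0\), which is a leaf. This gives depth \(O_d(1+\log\log(2+D_0))\).

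\emph{Output count.} Each nonterminal state has at most \(r\leq d\) children, namely one for each \(p_j>0\) in \eqref{eq:pole-exchange-children}. The number of leaves is therefore at most
\[
 d^{\mathrm{depth}}=\exp\bigl(O_d(1)\log\log(2+D_0)\bigr)=(1+\log D_0)^{O_d(1)}.
\]
Here I would handle small \(D_0\) by the additive constant in the exponent.

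\emph{Leaves are unimodular cones.} A leaf has \(h=0\) and the form \(u_0/\prod_{\ell}(1-u_\ell)\) with marker monomials \(u_\ell=\y^{\boldsymbol\alpha_\ell}\). By Lemma~\ref{lem:regularity-coprimality}, the columns \(\boldsymbol\alpha_\ell\) are independent and span a saturated lattice. Since \(r\leq d\) and the number of factors is preserved at \(d\) along the recursion (apart from the case \(b_j=0\)), this is the generating function of a possibly shifted unimodular cone. Well-definedness of every step is Lemmas~\ref{lem:h-le-r} and~\ref{lem:regularity-coprimality}.

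\emph{Main obstacle.} I expect the delicate step to be the claim that normalization never increases \(D\) and that the carried count \(h\) stays bounded by \(r\) after every Smith step, so that the \(1/(hr)\) exponent really applies at each level. I would verify this directly from the block form \eqref{eq:raw-child-matrix} and Lemma~\ref{lem:h-le-r}. The depth count near the threshold also needs care, and there I would rely on the \(\lfloor D/2\rfloor\) bound.
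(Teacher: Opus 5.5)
Your proposal is correct and follows essentially the same route as the paper's proof. The shared steps are the formula $D_j=\frac{D}{s}[k\gamma_j]_s$, the Minkowski bound combined with $s\geq D^{1/h}$ and $h\leq r\leq d$ to get $D^{1-1/d^2}$, the halving bound once $D<2^{d^2}$, and the branching factor $r\leq d$ giving at most $d^{\mathrm{depth}}$ leaves. You also make explicit two points the paper leaves implicit: that Smith normalization and deletion of unit entries leave the determinant unchanged, and that the leaves are unimodular by the saturation part of Lemma~\ref{lem:regularity-coprimality}. On the latter, your parenthetical exception for $b_j=0$ is unnecessary, since such factors stay in the product and hence $r=d$ at every leaf.
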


\begin{proof}
Consider a nonterminal state with \(h\) carried equations, \(r\) remaining
denominator factors, determinant \(D=s_1\cdots s_h\), and largest Smith
entry \(s=s_h\).  By Lemma~\ref{lem:h-le-r}, \(h\leq r\leq d\).  If
\(D_j\) is the determinant of a nonzero child after normalization, then
\eqref{eq:raw-child-determinant} and \eqref{eq:child-focused-order} give
\[
\begin{aligned}
 D_j
 &=\frac{D}{s}\,g p_j
 =\frac{D}{s}\,g[\kappa\gamma_j]_b\\
 &=\frac{D}{s}[k\gamma_j]_s.
\end{aligned}
\]
Here \(g p_j=[k\gamma_j]_s\) follows from \(s=gb\) and
\(\kappa\equiv k/g\pmod b\).
Since \(s=s_h\) is the largest Smith entry, \(s_i\leq s\) for every
\(i\), and hence
\[
 D=s_1\cdots s_h\leq s^h,
 \qquad
 s\geq D^{1/h}.
\]
On the other hand, \eqref{eq:k-range} gives
\([k\gamma_j]_s\leq s^{(r-1)/r}\).  Substitution into the preceding
formula for \(D_j\) yields
\[
 D_j
 \leq\frac{D}{s}s^{(r-1)/r}
 =\frac{D}{s^{1/r}}.
\]
Now \(s^{1/r}\geq D^{1/(hr)}\), so
\[
 D_j
 \leq D^{1-1/(hr)}
 \leq D^{1-1/d^2},
\]
where the last inequality follows from \(h\leq r\leq d\), and hence
\(hr\leq d^2\).

The centered residue also satisfies \([k\gamma_j]_s\leq s/2\), so
\[
 D_j\leq\left\lfloor\frac D2\right\rfloor.
\]
Thus the effective bound is
\[
 D_j
 \leq
 \min\!\left\{
 D^{1-1/d^2},
 \left\lfloor\frac D2\right\rfloor
 \right\}.
\]
The halving bound is the sharper one when \(D<2^{d^2}\).

If \(D=1\), all unit Smith entries have been deleted, so Step~4 outputs a
marker-only leaf and creates no further child.

Assume \(d\geq2\) and put \(\theta_d=1-d^{-2}\).  Starting from \(D_0\),
while the current determinant is at least \(2^{d^2}\), a branch satisfies
\[
 D_q\leq D_0^{\theta_d^q}
\]
after \(q\) levels.  If \(D_0<2^{d^2}\), set \(q_0=0\).  Otherwise, set
\[
 q_0
 :=
 1+\left\lfloor
 \frac{
  \log\!\left(\log D_0/(d^2\log 2)\right)
 }{
  -\log(1-d^{-2})
 }
 \right\rfloor.
\]
Then
\[
 \theta_d^{q_0}\log D_0<d^2\log 2,
 \qquad
 D_{q_0}
 \leq\exp\!\left(\theta_d^{q_0}\log D_0\right)
 <2^{d^2}.
\]
Since \(-\log(1-d^{-2})\) depends only on \(d\), this gives
\[
 q_0=O_d\bigl(1+\log\log(2+D_0)\bigr).
\]
Thereafter, repeated use of the halving bound gives
\[
 D_{q_0+t}
 \leq
 \left\lfloor\frac{D_{q_0}}{2^t}\right\rfloor.
\]
Because the determinants are positive integers and
\(D_{q_0}<2^{d^2}\), the branch reaches \(D=1\) within at most
\(d^2-1\) further levels.  Thus its depth is at most \(q_0+d^2-1\).
When \(d=1\), every child already has determinant \(1\).  This proves the
asserted depth bound.

Every node has at most \(r\leq d\) children.  Therefore
\[
 N_{\mathrm{out}}
 \leq
 d^{O_d(1+\log\log(2+D_0))}
 =
 (1+\log D_0)^{O_d(1)}.
\]
Finally, the exact split, pole exchange, joint realization, and Smith
normalization are identities, so the signed sum of the leaves is
\(\sigma_{\mathcal K}(\y)\).
\end{proof}

\begin{remark}[Comparison with Barvinok's algorithm]
The conclusion above is parallel to the fixed-dimensional cone
decomposition underlying Barvinok's algorithm
\cite{Barvinok1994,BarvinokPommersheim1999}: both methods express the
generating function of a simplicial cone as a polynomial-size signed sum
of unimodular generating functions.  Writing \(D_0\) for the respective
initial progress parameter, both have
\(O_d(1+\log\log(2+D_0))\) recursion depth and polynomially many leaves in
\(1+\log D_0\).  For \(d\geq2\), Barvinok's classical short-vector step gives
the stronger uniform index bound
\(D_{\mathrm{child}}\leq D^{(d-1)/d}\),
whereas the joint recursion above combines the uniform bound
\(D_j\leq D^{1-1/d^2}\) with the complementary halving bound.
Their recursive mechanisms are also different.  Barvinok's algorithm
constructs a geometric signed decomposition into cones of smaller index,
whereas Algorithm~\ref{alg:jointct-kernel} reduces the carried determinant
algebraically through exact splitting, pole exchange, joint root averages,
and Smith normalization.  Its intermediate children are therefore
root-average terms rather than cones in a geometric subdivision.
\end{remark}

\section{Concluding remarks}
\label{sec:conclusion}

We have presented a polynomial-time algorithm for lattice-point counting in fixed dimension that operates entirely within the framework of nested root averages and does not rely on Barvinok's unimodular decomposition. The algorithm takes as input a rational generating function of the form produced by the \texttt{SimpCone[S]} framework and computes its constant term via a recursive pole-exchange procedure. The key technical ingredients are a residue-lattice argument, based on Minkowski's theorem, that produces a short multiplier for the non-coprime split, and the use of Smith normal form to encode the coupled root equations of each child as a joint root average. Two structural invariants---the generation condition and full-column independence---ensure that the recursion is well defined and that all pole exchanges are valid.

For a fixed-dimensional simplicial cone, the algorithm achieves recursion depth \(O_d(1+\log\log(2+\ind(\mathcal K^*)))\) and produces a signed sum of at most \((1+\log \ind(\mathcal K^*))^{O_d(1)}\) unimodular cone generating functions. To our knowledge, this is the first polynomial-time fixed-dimensional algorithm for lattice-point counting that entirely avoids Barvinok decomposition. Moreover, the root-averaging framework uniformly handles Laurent polynomial numerators, a feature that is natural in MacMahon's partition analysis but not uniformly handled by Barvinok-based methods.

Several directions for future work remain. First, while the complexity bounds are polynomial in the input size for fixed \(d\), the exponents depend on \(d\) and are not optimized. A refined analysis may yield sharper constants. Second, a practical implementation of the algorithm would allow direct comparison with \texttt{LattE} and the \texttt{barvinok} library on benchmark families of polytopes. Finally, the residue-lattice approach can be adapted to parametric counting problems, like the examples presented in \cite{XinXuZhang2025}.

We believe that the joint root-average framework introduced here opens a promising alternative direction in the algorithmic theory of lattice-point enumeration, complementary to the well-established Barvinok paradigm.

\noindent
{\small \textbf{Acknowledgments:}}
%The authors would like to thank the anonymous referees for their valuable suggestions for improving the presentation.
%The authors would like to express sincere gratitude for all the suggestions that have improved the presentation of this paper.
The authors would like to thank Dr. Sihao Tao for helpful discussions on this work.
Guoce Xin is partially supported by the National Natural Science Foundation of China (12571355).

\end{document}